\documentclass[a4paper,12pt]{amsart}

\usepackage[headings]{fullpage}

\usepackage{amsfonts,graphics,amsmath,mathrsfs,amsthm,amscd,amssymb,latexsym,euscript,enumerate}
\usepackage{epsfig}
\usepackage{flafter}
\usepackage[all,cmtip,line]{xy}
\usepackage{array}
\usepackage[english]{babel}
\usepackage{overpic}
\usepackage{subfig}
\usepackage{multirow}
\usepackage{microtype}
\usepackage{wrapfig}
\usepackage{longtable}
\usepackage{supertabular}
\usepackage[shortlabels]{enumitem}
\usepackage{tikz}
\usetikzlibrary{positioning}
\usepackage{tikz-cd}
\usepackage{float}
\allowdisplaybreaks

\usepackage[dvipsnames,svgnames,table]{xcolor}
\usepackage{graphicx}
\usepackage{hyperref}
\hypersetup{
    colorlinks=true,
    linkcolor=blue,
    citecolor=blue,
    filecolor=blue,
    urlcolor=blue
}
\usepackage{cleveref}
\usepackage[textsize=footnotesize]{todonotes}

\newtheorem{theorem}{Theorem}[section]
\newtheorem{lemma}[theorem]{Lemma}
\newtheorem{proposition}[theorem]{Proposition}

\newtheorem{construction}[theorem]{Construction}
\newtheorem{question}[theorem]{Question}
\newtheorem*{theorem*}{Theorem}

\theoremstyle{plain}

\theoremstyle{definition}
\newtheorem{definition}[theorem]{Definition}
\newtheorem{definition-lemma}[theorem]{Definition-Lemma}

\newtheorem{remark}[theorem]{Remark}

\numberwithin{equation}{section}

\newcommand{\C}{\mathbb{C}}
\newcommand{\R}{\mathbb{R}}
\newcommand{\Z}{\mathbb{Z}}

\newcommand{\Q}{\mathbb{Q}}
\newcommand{\OO}{\mathcal{O}}

\newcommand{\G}{\mathbb{G}}
\def\P{\mathbb{P}}

\DeclareMathOperator{\Nef}{Nef}

\DeclareMathOperator{\Eff}{Eff}

\DeclareMathOperator{\codim}{codim}

\def\Pic{\operatorname{Pic}}

\def\Spec{\operatorname{Spec}}

\def\Mov{\operatorname{Mov}}

\def\Supp{\operatorname{Supp}}

\def\NE{\operatorname{NE}}

\usepackage{mathtools}

\DeclarePairedDelimiterX{\norm}[1]{\lVert}{\rVert}{#1}

\title[Mori dream fibers and the geometric generic fiber]{Mori dream fibers and the geometric generic fiber}

\begin{document}

\author[D.-W. Lee]{Dae-Won Lee}
\author[M. Nagaoka]{Masaru Nagaoka}
\address[Dae-Won Lee]{School of Mathematics, Korea Institute for Advanced Study, 85 Hoegiro, Dongdaemun-gu, Seoul 02455, Republic of Korea}
\email{daewonlee@kias.re.kr}
\address[Masaru Nagaoka]{Gakushuin University, 1-5-1 Mejiro, Toshima-ku, Tokyo 171-8588, Japan}
\email{masaru.nagaoka@gakushuin.ac.jp}

\subjclass[2020]{14E30, 14D06, 14J26}
\date{\today}
\keywords{Mori dream space, Mori dream morphism, geometric generic fiber, rational surface}

\begin{abstract}
We construct a smooth projective family of rational surfaces over $\G_{\mathrm m,\Z}$. The Mori dream property of a fiber is determined by the torsion of the normal bundle of an anticanonical cycle. Over $\mathbb C$, the locus of Mori dream fibers is Zariski dense. For every prime $p$, every geometric fiber over a closed point of the reduction modulo $p$ is a Mori dream surface, whereas the geometric generic fiber is not a Mori dream space. In either setting, no restriction to a nonempty open subset is a Mori dream morphism. We also prove that, over any algebraically closed field, a projective fibration becomes a Mori dream morphism after shrinking the base whenever the set of points with Mori dream fibers is not contained in a countable union of proper closed subsets.
\end{abstract}

\maketitle

%\tableofcontents

%%%%%%%%%%%%%%%%%%%%%%%%%%%%%%%%%%%%%%%%%%%%%%%%%%%%%
\section{Introduction}

Hu--Keel introduced Mori dream spaces to study projective varieties whose birational contractions are described by a finite decomposition of the effective cone into rational polyhedral chambers. On a Mori dream space defined over $\C$, the minimal model program can be run for every $\Q$-Cartier divisor, and every sequence of flips terminates, see \cite[Definition 1.10 and Proposition 1.11]{HK}. For a normal $\Q$-factorial projective variety $Y$ over $\C$ satisfying $\Pic(Y)_{\Q}=N^1(Y)_{\Q}$, the Mori dream property is equivalent to the finite generation of a Cox ring \cite[Proposition 2.9]{HK}.

An important class of examples consists of $\Q$-factorial projective varieties of Fano type over $\C$. Such a variety admits an effective $\Q$-divisor $\Delta$ for which $(Y,\Delta)$ is klt and $-(K_Y+\Delta)$ is ample. It is well known that varieties of Fano type are Mori dream spaces \cite[Corollary 1.3.2]{BCHM}.

Let $f\colon X\to T$ be a projective fibration between normal varieties. Throughout, by a projective fibration, we mean a projective surjective morphism satisfying $f_*\OO_X=\OO_T$. For a Mori dream morphism, the relative movable cone is covered by the nef cones of finitely many small $\Q$-factorial modifications, and each of these nef cones is generated by finitely many relatively semiample divisor classes, see \cite[Definition 3.1]{Ohta} and Definition \ref{def:mdm}. Over $\C$, Mori dream morphisms admit a relative minimal model program for every $\Q$-Cartier divisor, see \cite[Theorem 5.6]{Ohta}. Under the $\Q$-factoriality and relative Picard conditions, they are also characterized by finite generation of a relative Cox sheaf \cite[Theorem 1.1]{Ohta}.

In \cite{CLZ}, Choi--Li--Zhou ask whether the Fano type property of the fibers over a Zariski dense subset implies the relative Fano type property after shrinking the base \cite[Question 1.1]{CLZ}. They establish this implication under additional assumptions, including the weak Fano condition, a very general choice of the points, or bounded klt complements \cite[Theorem 1.2]{CLZ}. Recent work of Kim gives further affirmative results for families with fibers of Fano type. If the anticanonical volumes of the Fano type fibers are equal to a fixed positive number on a Zariski dense subset of the base, then the geometric generic fiber is of Fano type \cite[Theorem 1.2]{Kim25}. For families of surfaces, the same conclusion holds for a Zariski dense subset of Fano type fibers without any assumption on the anticanonical volumes \cite[Theorem 1.3]{Kim25}.

Choi--Li--Zhou also ask the analogous question for Mori dream spaces. 

\begin{question}[\protect{\cite[Question 3.2]{CLZ}}]\label{que}
    Let $f\colon X\to T$ be a projective fibration between normal varieties. Suppose that there exists a Zariski dense subset $S\subset T$ such that $X_s$ is a Mori dream space for every $s\in S$. Does there exist a nonempty open subset $U\subset T$ such that the restricted morphism $X_U\to U$ is a Mori dream morphism?
\end{question}

In this paper, we give a negative answer to Question \ref{que}. In Construction \ref{con:family}, we construct a smooth projective family of rational surfaces $\Pi\colon\mathfrak X\to\mathbb G_{\mathrm m,\Z}$. Proposition \ref{prop:fiberwise} describes the negative curves of every geometric fiber and shows that the Mori and nef cones are rational polyhedral and independent of the parameter with respect to the fixed bases of total transforms. Proposition \ref{prop:fiberwise} also shows that a fiber is a Mori dream space precisely when its parameter has finite multiplicative order.

After base change to $\C$, let $X_q$ denote the fiber over $q\in \C^{\times}$. Proposition \ref{prop:complex-counterexample} gives
\[
\{q\in\mathbb C^\times\mid X_q\text{ is a Mori dream space}\}=\mu_\infty,
\]
where $\mu_\infty$ denotes the group of roots of unity in $\mathbb C^\times$. Thus, the locus of Mori dream fibers is Zariski dense and nonconstructible. Proposition \ref{prop:finite-field-counterexample} gives a second counterexample after reduction modulo every prime $p$: every geometric fiber over a closed point of $\mathbb G_{\mathrm m,\mathbb F_p}$ is a Mori dream surface, whereas the geometric generic fiber is not a Mori dream space. In both settings, no restriction to a nonempty open subset of the base is a Mori dream morphism.

The following theorem gives a positive answer under a stronger genericity assumption on the set of points with Mori dream fibers.

\begin{theorem}\label{thm:main}
Let $k$ be an algebraically closed field of arbitrary characteristic, and let $f\colon X\to T$ be a projective fibration between normal integral $k$-varieties.  Let $S\subset T(k)$ be a subset such that $X_s$ is a Mori dream space for every $s\in S$. Assume that $S$ is not contained in a countable union of proper Zariski closed subsets of $T$. Then there is a nonempty open subset $U\subset T$ such that $X_U\to U$ is a Mori dream morphism.

In particular, if $T$ is a curve and $X_s$ is a Mori dream space for uncountably many points $s\in T(k)$, then there is a nonempty open subset $U\subset T$ such that $X_U\to U$ is a Mori dream morphism.
\end{theorem}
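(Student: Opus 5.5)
We prove Theorem \ref{thm:main} by reducing to a single sufficiently general fiber and then spreading out the relevant finite data. Since $k$ is algebraically closed, $S$ is not contained in a countable union of proper closed subsets, so $S$ meets the complement of any such countable union. Our goal is therefore a countable collection of proper closed subsets $Z_i\subsetneq T$ with two properties. First, for $s\notin\bigcup Z_i$ the fiber $X_s$ looks like the geometric generic fiber $X_{\bar\eta}$: restriction identifies $N^1$, the effective cones, the movable cones and the small $\Q$-factorial modifications. Second, a Mori dream structure on $X_s$ then forces one over an open $U\subset T$.

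\emph{Step 1 (very general fibers).} We first build a model over a countable field. Choose a countable algebraically closed subfield $k_0\subset k$ over which $f$, $X$ and $T$ are defined, say $f=f_0\times_{k_0}k$. Every divisor or line bundle on a fiber $X_s$, every rational contraction of $X_s$, and every relevant section of $\OO(mD)$ for $D$ in a lattice is then defined over a countable family of $k_0$-constructible loci in $T$. By the usual Hilbert scheme and relative Picard scheme arguments, each such object either exists over the generic point after a finite base change, or its locus lies in a proper closed subset. There are only countably many such loci, coming from components of relative Hilbert and Picard schemes over $k_0$. Let $\bigcup Z_i$ be the union of the proper ones. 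For $s$ outside this union, the specialization $\Pic(X_{\bar\eta})\to\Pic(X_s)$ is an isomorphism after tensoring with $\Q$. The same holds for all dimensions $h^0(X_s,\OO(D))$, which agree with their generic values, and for effectivity and $\Q$-factoriality, which spread from the generic fiber since $\Q$-factoriality is also tested on countably many divisors.

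\emph{Step 2 (spreading out).} Pick $s\in S\setminus\bigcup Z_i$. Since $X_s$ is Mori dream, it has finitely many SQMs $X_s\dashrightarrow X_s^{(j)}$ whose nef cones are generated by semiample classes $D_{jl}$. By Step 1, each $D_{jl}$ comes from a class on $X$ over some étale neighbourhood, and the section rings and rational maps defining the $X^{(j)}_s$ come from generic objects. Equivalently, the Cox ring of $X_s$ is finitely generated. Generators and relations of $\bigoplus H^0(X_s,\OO(D))$ lift to sections over an open $U$, because the $h^0$ are generically constant on the lattice. Cohomology and base change then shows that the lifted sections generate the relative Cox sheaf over a possibly smaller $U$. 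By the characterization \cite[Theorem 1.1]{Ohta}, or by directly checking Definition \ref{def:mdm} with the spread-out SQMs $X_U\dashrightarrow X_U^{(j)}$ and relatively semiample divisors, $X_U\to U$ is a Mori dream morphism. The curve case follows immediately, since a countable union of proper closed subsets of a curve is countable.

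\emph{Main obstacle.} The hard part is Step 1's claim that for very general $s$ the effective and movable cones of $X_s$ agree with those of the generic fiber. A priori, special fibers could carry extra effective divisors. The fix is that each such divisor lies in a component of the relative Hilbert scheme, and a component that does not dominate $T$ has proper image. A second difficulty is that Ohta's characterization needs $\Q$-factoriality and $\Pic=N^1$ conditions. Where these fail I would instead verify Definition \ref{def:mdm} directly: spread out the finitely many SQMs and check relative semiampleness over $U$, using that semiampleness on the generic fiber spreads to an open set.
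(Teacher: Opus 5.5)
Your Step 2 has a genuine gap: you never get from the geometric generic fiber back down to $T$. Through a very general $s$, the classes $D_{jl}$, the SQMs $X_s\dashrightarrow X_s^{(j)}$ and any Cox-ring generators of $X_s$ correspond to objects on $X_{\overline\eta}$. These are in general defined only over a finite extension $K'$ of $K=k(T)$; this is exactly your ``\'etale neighbourhood''. Spreading them out therefore gives at best a Mori dream morphism over an open subset of a generically finite cover $T'\to T$, not over an open $U\subset T$.

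This is not a formality. By Lemma~\ref{lem:qfactor-open}, $N^1(X_U/U)_{\Q}\simeq\Pic(X_\eta)_{\Q}$, which can be a proper subspace of $\Pic(X_{\overline\eta})_{\Q}$ when the Galois group acts nontrivially (e.g.\ a family of cubic surfaces whose $27$ lines are permuted). In that case the classes $D_{jl}$ and the ``spread-out SQMs $X_U\dashrightarrow X_U^{(j)}$'' do not exist over $U$. The chamber decomposition of $\Mov(X_U/U)$ has to be built from SQMs defined over $K$, not from those of $X_s$.

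What is missing is a descent theorem for the Mori dream property along $\overline K/K$. The paper supplies it in two steps:
\begin{enumerate}
\item It shows that $X_{\overline\eta}$ is a Mori dream space in Okawa's sense. Lemma~\ref{lem:picard-conditions}(2), which needs $\overline{k(T)}$ uncountable, converts $\Pic_{\Q}=N^1_{\Q}$ into $\dim\Pic^0=0$ for $X_{\overline\eta}$ and for each of its SQMs.
\item It invokes Proposition~\ref{prop:okawa} to conclude that $X_\eta$ is Mori dream over $K$.
\end{enumerate}
Only the SQMs, semiample generators and generating sections of $X_\eta$, which live over $K$, are then spread out over $U$ (Proposition~\ref{prop:generic-criterion}).

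Two smaller points. First, the Cox-ring route does not close as written. \cite[Theorem 1.1]{Ohta} is stated over $\C$ and presupposes relative $\Q$-factoriality and $\Pic(X_U/U)_{\Q}=N^1(X_U/U)_{\Q}$, whereas the theorem is in arbitrary characteristic. Moreover, generic constancy of $h^0$ gives one open set per degree. Lifting finitely many generators and relations therefore does not by itself show that they generate every graded piece of the relative Cox sheaf over a single $U$. Your fallback of checking Definition~\ref{def:mdm} directly is the right one. It additionally needs:
\begin{enumerate}
\item $\Q$-factoriality of $X_U$ and the identification $N^1(X_U/U)\simeq N^1(X_\eta)$ (Lemma~\ref{lem:qfactor-open});
\item smallness of the extended maps after shrinking (Lemma~\ref{lem:small-open});
\item the inclusion $\Mov(X_U/U)\subseteq\Mov(X_\eta)$.
\end{enumerate}

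Second, the obstacle you single out in Step 1 is not really one. If $s$ maps to the generic point of the countable model $T_0$, there is a $k_0(T_0)$-isomorphism $\sigma\colon k\to\overline{k(T)}$ with $X_s\times_{\sigma}\overline{k(T)}\simeq X_{\overline\eta}$ (Lemmas~\ref{lem:countable-model} and~\ref{lem:field-comparison}). So the Mori dream property transfers to $X_{\overline\eta}$ wholesale, with no Hilbert or Picard scheme bookkeeping. Note also that there is no natural specialization map $\Pic(X_{\overline\eta})\to\Pic(X_s)$, only this abstract identification.
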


Theorem \ref{thm:main} is complementary to \cite[Theorem 1.2]{CLZ}. We assume only that the relevant fibers are Mori dream spaces, while imposing the stronger genericity condition that $S$ is not contained in a countable union of proper closed subsets.

We briefly explain the proof of Theorem \ref{thm:main}. By Lemma \ref{lem:countable-model}, we choose a countable algebraically closed subfield $k_0\subset k$, a model $f_0\colon X_0\to T_0$ over $k_0$, and a point $s\in S$ whose image in $T_0$ is the generic point. Lemma \ref{lem:field-comparison} gives a compatible isomorphism between the two algebraically closed extensions of $k_0(T_0)$ arising from $s$ and from the geometric generic point. Consequently, a base change of $X_s$ is isomorphic to the geometric generic fiber $X_{\overline\eta}$, and hence $X_{\overline\eta}$ is a Mori dream space.

Proposition \ref{prop:generic-criterion} then spreads the Mori dream structure of the geometric generic fiber over a nonempty open subset of $T$. More precisely, the finitely many small $\Q$-factorial modifications, the semiample generators of their nef cones, and generating sections are extended over an open subset by Lemma \ref{lem:models-open}; after further shrinking, the birational maps remain small by Lemma \ref{lem:small-open}, while Lemma \ref{lem:qfactor-open} identifies the relative divisor classes with those on the generic fiber. These data determine the relative nef cones and the decomposition of the movable cone required in Definition \ref{def:mdm}.

The rest of the paper is organized as follows. In Section \ref{sec:prelim}, we collect the definitions, descent and extension results, and results on surfaces used in the proofs. In Section \ref{sec:main}, we construct the family, establish the two counterexamples, and prove Theorem \ref{thm:main}.

\section*{Acknowledgement}
The authors would like to express their gratitude to Prof. Sung Rak Choi for his valuable comments. The first author is partially supported by Samsung Science and Technology Foundation under Project Number SSTF-BA2302-03, by the National Research Foundation of Korea (No. RS-2025-00513064) and by a KIAS Individual Grant (MG111201) at Korea Institute for Advanced Study. The second author is supported by JSPS KAKENHI Grant Number JP21K13768.

\section{Preliminaries}\label{sec:prelim}

\subsection{Mori dream spaces and Mori dream morphisms}
We begin by fixing the notions used throughout the paper. 

Let $F$ be a field, and let $g\colon Y\to V$ be a projective fibration between normal quasi-projective varieties over $F$. 

A prime divisor $E$ on $Y$ is called \emph{horizontal over} $V$ when $\overline{g(E)}=V$, and \emph{vertical over} $V$ when $\overline{g(E)}\neq V$. An $\R$-divisor is horizontal (resp. vertical) over $V$ if every prime component of its support is horizontal (resp. vertical) over $V$.

We define the relative Picard group by $\Pic(Y/V)\coloneqq \Pic(Y)/g^{\ast}\Pic(V)$, where $g^{\ast}\Pic(V)$ denotes the image of the pullback homomorphism. We write $L_1\equiv_{V} L_2$ if two line bundles $L_1,L_2$ on $Y$ are numerically equivalent over $V$, i.e., $\deg(L_1|_C)=\deg(L_2|_C)$ for every integral projective curve $C\subset Y$ contracted by $g$. We let $N^1(Y/V)\coloneqq \Pic(Y)/\equiv_{V}$. For an abelian group $M$, we write $M_{\Q}\coloneqq M\otimes_{\Z} \Q$ and $M_{\R}\coloneqq M\otimes_{\Z} \R$. The real vector space $N^1(Y/V)_{\R}$ is the relative numerical divisor space, and all relative divisor cones are considered in this space. Pullbacks of line bundles on $V$ are numerically trivial over $V$, and hence there is a natural surjection $\Pic(Y/V)\to N^1(Y/V)$. We omit $V$ when $V=\Spec F$.

A $\Q$-Cartier divisor $D$ on $Y$ is called $g$-semiample if some positive multiple of $D$ is Cartier and relatively globally generated. A birational map $\psi\colon Y\dashrightarrow Y'$ over $V$ is called a \emph{small $\Q$-factorial modification} if $Y$ and $Y'$ are both $\Q$-factorial and $\psi$ is an isomorphism in codimension one. We write $\Nef(Y/V)$ for the nef cone over $V$ and $\overline{\Eff}(Y/V)$ for the pseudoeffective cone over $V$. 

A Cartier divisor $D$ on $Y$ is called \emph{$g$-movable} if
\[
\codim_Y\Supp\mathrm{Coker}\bigl(g^*g_*\OO_Y(D)\longrightarrow\OO_Y(D)\bigr)
 \geq 2.
\]
We denote by $\Mov(Y/V)$ the cone generated by the numerical classes of $g$-movable Cartier divisors. The notions of relative movability in \cite{CLZ} and \cite{Ohta} differ slightly. In \cite{CLZ}, the codimension condition is imposed on the Cartier divisor $D$ itself, whereas in \cite{Ohta}, it is imposed on a positive multiple of a $\Q$-divisor. The two definitions nevertheless define the same relative movable cone, since if $mD$ is movable for a positive integer $m$, then
\[
 [D]=\frac{1}{m}[mD].
\]

\begin{definition}\label{def:mdm}
The morphism $g\colon Y\to V$ is a \emph{Mori dream morphism} if the following
conditions hold.
\begin{enumerate}[(1)]
 \item The variety $Y$ is $\Q$-factorial.
 \item One has $\Pic(Y/V)_{\Q}=N^1(Y/V)_{\Q}$.
 \item The cone $\Nef(Y/V)$ is rational polyhedral and generated by finitely many
 $g$-semiample divisor classes.
 \item There are finitely many small $\Q$-factorial modifications
 $\phi_i\colon Y\dashrightarrow Y_i$ over $V$ such that every
 $Y_i\to V$ satisfies (1)--(3) and
 \[
  \Mov(Y/V)=\bigcup_i\phi_i^*\Nef(Y_i/V).
 \]
\end{enumerate}
When $V=\Spec F$, we call $Y$ a \emph{Mori dream space} in the sense of Definition \ref{def:mdm}.
\end{definition}

\begin{remark}\label{rem:nef-semiample}
If $g\colon Y\to V$ satisfies conditions (2) and
(3) of Definition \ref{def:mdm}, then every $g$-nef
$\Q$-Cartier divisor on $Y$ is $g$-semiample.

Indeed, condition (2) lifts a rational expression for the numerical class of a nef divisor in terms of the semiample generators from $N^1(Y/V)_{\Q}$ to $\Pic(Y/V)_{\Q}$. After clearing denominators and replacing the generators by positive multiples which are relatively globally generated, a positive multiple of the original nef divisor is relatively globally generated, see \cite[Proposition 3.2]{Ohta}.
\end{remark}

Ohta formulates Definition \ref{def:mdm} over $\mathbb C$, and we use the same four conditions in Definition \ref{def:mdm} over an arbitrary field. Over
$\mathbb C$, Definition \ref{def:mdm} agrees with \cite[Definition 3.1]{Ohta}. If $F$ is algebraically closed and $V=\Spec F$, then Definition \ref{def:mdm} coincides with the Hu--Keel definition \cite[Definition 1.10]{HK}. On the other hand, even if $F$ is algebraically closed, the absolute case $V=\Spec F$ of Definition \ref{def:mdm} need not coincide with Okawa's definition. Lemma \ref{lem:picard-conditions} gives the comparison required in the proof of Proposition \ref{prop:generic-criterion}.

\subsection{Descent and extension over an open subset}\label{subsec:descent}
The following results allow us to pass between the geometric generic fiber, the generic fiber, and models over a nonempty open subset of the base. For normal projective varieties over arbitrary fields, we use Okawa's definition of a Mori dream space \cite[Definition 2.1]{Okawa}.

\begin{proposition}[{\protect\cite[Proposition 2.11]{Okawa}}]\label{prop:okawa}
Let $K\subset L$ be a field extension and let $Y$ be a normal projective
variety over $K$. If $Y_L$ is a Mori dream space in the sense of Okawa, then $Y$ is a Mori dream space in the sense of Okawa.
\end{proposition}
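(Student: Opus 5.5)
The strategy is the one Okawa uses: show that Mori dream data on $Y_L$ descend to $Y$. Okawa's definition asks that $Y$ be normal, projective and $\Q$-factorial, that $\Pic(Y)_\Q = N^1(Y)_\Q$ (equivalently, the Cox ring is defined and finitely generated), that the nef cone be generated by finitely many semiample classes, and that the movable cone be a finite union of nef cones of small $\Q$-factorial modifications. I would verify these conditions one at a time, using three tools: faithfully flat base change along $K\to L$, the injectivity of $\Pic(Y)\to\Pic(Y_L)$, and the fact that all cones for $Y$ sit inside the corresponding cones for $Y_L$.

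\begin{enumerate}[(a)]
\item \emph{Reduce to finite Galois extensions.} First I would reduce to the case where $L/K$ is finitely generated. The isomorphisms giving the small modifications, the relevant divisor classes, and the generating sections are all defined over some finitely generated subextension. A finitely generated extension can then be split into a purely transcendental part followed by a finite part. The purely transcendental part is handled by specialization: choose a $K$-point of a dense open subset of the parameter space, which is possible when $K$ is infinite; the finite-field case needs separate care. For the finite part, I would pass to the normal closure, so that one step is finite Galois and one step is purely inseparable.
\item \emph{Picard and numerical conditions.} For Galois $L/K$, the Hochschild--Serre spectral sequence shows that $\Pic(Y)_\Q \to \Pic(Y_L)^{G}_\Q$ is an isomorphism, using $H^0(Y,\OO_Y)=K$. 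It is injective in general. Numerical equivalence commutes with flat base change, because intersection numbers are preserved. Together these give $\Pic(Y)_\Q = N^1(Y)_\Q$. For $\Q$-factoriality, take a Weil divisor $D$ on $Y$. Some multiple $mD_L$ is Cartier, and since $Y_L\to Y$ is faithfully flat, $\OO(mD)$ is invertible by fpqc descent of local freeness.
\item \emph{Finite generation.} With the Picard condition in place, the Cox ring of $Y$ tensored with $L$ is a Veronese-type subring of the Cox ring of $Y_L$. Concretely, it is $\bigoplus_{D\in\Gamma} H^0(Y_L,D_L)$, where $\Gamma\subset\Pic(Y)$ is a finite-index sublattice of the $G$-invariants. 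Flat base change gives $H^0(Y,D)\otimes L = H^0(Y_L,D_L)$. Finite generation of the Cox ring of $Y_L$ therefore passes to this subring, since it is an invariant ring or a finite-index sublattice-grading, and then descends along the faithfully flat map $K\to L$. Finally, Hu--Keel's equivalence, in Okawa's generality, says that finite generation of the Cox ring together with $\Q$-factoriality and the Picard condition is equivalent to the Mori dream property.
\end{enumerate}

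The step I expect to be the main obstacle is the purely inseparable or non-Galois part, where invariants are unavailable. There I would exploit that Frobenius-type maps act trivially after tensoring with $\Q$: the relevant $\Pic$ cokernel is $p$-power torsion, so the rational lattices agree. As a fallback, I would instead use the cone-theoretic definition directly, and pull back semiample generators via norms of sections.
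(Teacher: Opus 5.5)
The paper does not prove this statement; it quotes it from Okawa, so your proposal is judged on its own. \textbf{The genuine problem is the reduction in (a).}

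\emph{Normal closure.} Passing to the normal closure $L'$ of a finite extension $L/K$ only helps if $Y_{L'}$ is a Mori dream space. You know this only for $Y_L$, and the Mori dream property does not ascend along field extensions. A $\Q$-factorial $K$-variety with $\dim\Pic^0_{Y/K}=0$ and Picard number one is automatically a Mori dream space. Its base change to a finite extension, however, can acquire new divisor classes and a nef cone that is not rational polyhedral; this is expected for suitable K3 surfaces. So you cannot enlarge $L$.

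\emph{Specialization.} The purely transcendental step is unsupported. Spreading the data over an open subset of $\mathbb A^n_K$ and specializing at a $K$-point would require showing that $\Q$-factoriality, smallness, semiampleness and the decomposition of the movable cone all survive specialization. You give no argument for this, and you leave finite $K$ open. Even the preliminary reduction to finitely generated $L$ is itself a descent statement of the kind you are trying to prove. The ``main obstacle'' you single out, the purely inseparable case, is an artifact of this reduction.

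\textbf{The fix is to delete (a).} Steps (b) and (c) work verbatim for an arbitrary extension $L/K$, once the following points are corrected.
\begin{enumerate}[(1)]
\item $\Q$-factoriality descends by your fpqc argument.
\item Okawa's Picard condition, as recorded in this paper, is $\dim\Pic^0_{Y/K}=0$, not $\Pic(Y)_\Q=N^1(Y)_\Q$. Your parenthetical equating it with finite generation of the Cox ring is also not right. This condition descends at once, because $\Pic^0_{Y_L/L}\simeq(\Pic^0_{Y/K})_L$. No Hochschild--Serre argument is needed.
\item Injectivity of $\Pic(Y)\to\Pic(Y_L)$ and flat base change $H^0(Y,D)\otimes_KL=H^0(Y_L,D_L)$ both hold for every $L$.
\item The finite-generation step in (c) needs a different justification. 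The image of $\Pic(Y)$ in $\Pic(Y_L)$ usually has strictly smaller rank, so neither a finite-index Veronese argument nor Galois invariants apply. Instead, take Cartier divisors $D_1,\dots,D_r$ whose classes form a basis of $\Pic(Y)_\Q$. By injectivity you can extend $D_{1,L},\dots,D_{r,L}$ by divisors $E_1,\dots,E_s$ to a basis of $\Pic(Y_L)_\Q$. Then $R(Y;D_1,\dots,D_r)\otimes_KL$ is the part of the finitely generated $\Z^{r+s}$-graded ring $R(Y_L;D_{1,L},\dots,D_{r,L},E_1,\dots,E_s)$ lying in degrees $\Z^r\times\{0\}$. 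This is its ring of invariants under the split torus $\mathbb G_{\mathrm m}^s$, hence finitely generated. Alternatively, apply Gordan's lemma to the monomials in homogeneous generators.
\item Finite generation then descends along the faithfully flat map $K\to L$. Okawa's version of the Hu--Keel criterion gives the conclusion.
\end{enumerate}
With (a) removed and these corrections, your argument is complete.
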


The next lemma compares the Picard condition in Definition \ref{def:mdm} with the condition on $\Pic^0$ in \cite{Okawa}.

\begin{lemma}\label{lem:picard-conditions}
    Let $F$ be a field, and let $Y$ be a normal projective variety over $F$ with $H^0(Y,\OO_Y)=F$.
    \begin{enumerate}[(1)]
        \item If $\dim \Pic^0_{Y/F}=0$, then the natural map
        \[
        \Pic(Y)_{\Q}\longrightarrow N^1(Y)_{\Q}
        \]
        is an isomorphism.
        \item Assume in addition that $F$ is uncountable and algebraically closed. If the map $\Pic(Y)_{\Q}\to N^1(Y)_{\Q}$ is an isomorphism, then $\dim \Pic^0_{Y/F}=0$.
    \end{enumerate}
    Consequently, over an uncountable algebraically closed field, the absolute case of Definition \ref{def:mdm} coincides with Okawa's definition of a Mori dream space.
\end{lemma}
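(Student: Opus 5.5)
Both parts rest on the structure of the Picard scheme of a normal projective variety $Y$ over $F$ with $H^0(Y,\OO_Y)=F$. The key facts are the following. $\Pic_{Y/F}$ exists as a group scheme locally of finite type (possibly after fppf sheafification when $Y(F)=\emptyset$), with identity component $\Pic^0_{Y/F}$. The group $\Pic^\tau_{Y/F}$ of numerically trivial classes is of finite type, so $\Pic^\tau/\Pic^0$ is a finite group scheme. Finally, $N^1(Y)$ is the image of $\Pic(Y)$ in the Néron–Severi group $\Pic_{Y/F}(\overline F)/\Pic^\tau(\overline F)$, which is finitely generated, so $N^1(Y)$ is torsion-free.

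For (1), the map $\Pic(Y)_\Q\to N^1(Y)_\Q$ is surjective by definition. Its kernel is $\bigl(\Pic^\tau(Y)\bigr)_\Q$, where $\Pic^\tau(Y)$ is the group of numerically trivial line bundles. It therefore suffices to show that every numerically trivial line bundle $L$ is torsion. If $\dim\Pic^0_{Y/F}=0$, then $\Pic^0_{Y/F}$ is a finite (possibly non-reduced) group scheme. Hence $\Pic^\tau_{Y/F}$ is finite, being an extension of a finite group scheme by a finite one, so it is killed by some integer $N$. The class of $L$ gives an $F$-point of $\Pic^\tau_{Y/F}$, so $NL$ maps to the identity. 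Since $H^0(\OO_Y)=F$, the map $\Pic(Y)\to\Pic_{Y/F}(F)$ is injective, so $L^{\otimes N}\cong\OO_Y$. Thus the kernel is torsion and vanishes after $\otimes\Q$.

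For (2), suppose $\dim\Pic^0_{Y/F}>0$. Then $A:=(\Pic^0_{Y/F})_{\mathrm{red}}$ is a positive-dimensional connected group variety over $F=\overline F$. Because $F$ is algebraically closed, $Y(F)\neq\emptyset$ and $\Pic(Y)=\Pic_{Y/F}(F)$, so every point of $A(F)$ is an honest numerically trivial line bundle. The torsion subgroup of $A(F)$ is countable. For an abelian variety of dimension $g$ this holds because the $n$-torsion is finite for each $n$. For $\G_a$ and $\G_m$ factors it holds as well: in characteristic $p$ the torsion of $\G_a$ is all of $F$, but $\G_a$ cannot occur in $\Pic^0$ of a normal projective variety. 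I expect this to need care. The Chevalley decomposition of $\Pic^0$ for normal $Y$ may contain unipotent parts only in non-reduced situations, so the safer route is to argue directly with the uncountability of $F$. A positive-dimensional variety over an uncountable algebraically closed field has uncountably many $F$-points. The subgroup of $A(F)$ of elements killed by $n$ is the $F$-points of the closed subgroup scheme $A[n]$. If $A[n]$ were all of $A$ for some $n$, then $A$ would be unipotent of exponent $p$, which I would rule out using normality of $Y$ together with the abelian-variety structure of $(\Pic^0)_{\mathrm{red}}$ for normal projective $Y$. Otherwise each $A[n]$ is a proper closed subgroup and hence of lower dimension. A positive-dimensional irreducible variety over an uncountable field is not a countable union of proper closed subsets. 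So some $F$-point of $A$ is non-torsion, giving a numerically trivial line bundle that survives in $\Pic(Y)_\Q$, which contradicts injectivity.

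The main obstacle is this structural input: ensuring $(\Pic^0_{Y/F})_{\mathrm{red}}$ is not a torsion group. I would cite that for normal projective $Y$ it is an abelian variety (FGA explained, Kleiman, Thm. 9.5.4 and Prop. 9.5.3), or else argue via the Albanese. The final statement then follows immediately. Okawa's definition replaces condition (2) of Definition \ref{def:mdm} by $\dim\Pic^0=0$, with the other conditions identical, and by (1) and (2) these two conditions are equivalent over an uncountable algebraically closed field.
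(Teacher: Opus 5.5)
Your proof is correct and follows the paper's argument. In (1), the finiteness of $\Pic^\tau_{Y/F}$, coming from the finiteness of $\Pic^\tau/\Pic^0$, makes every numerically trivial line bundle torsion. In (2), Kleiman's Theorem 9.5.4 makes $(\Pic^0_{Y/F})_{\mathrm{red}}$ a positive-dimensional abelian variety, so its uncountably many $F$-points cannot all be torsion.

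Your detour through $\G_a$, Chevalley decompositions and the countable-union-of-proper-subgroups argument is unnecessary once you invoke 9.5.4. Then each $A[n]$ is finite, so the torsion is countable, exactly as in the paper.
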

\begin{proof}
Let $\Pic^0_{Y/F}$ be the identity component of the Picard scheme $\Pic_{Y/F}$, and let $\Pic^{\tau}_{Y/F}$ be as in \cite[Definition 9.6.8]{Kleiman}. The quotient $\Pic^{\tau}_{Y/F}/\Pic^0_{Y/F}$ is finite by \cite[Proposition 9.6.12]{Kleiman}. Note that $\Pic^{\tau}_{Y/F}$ parametrizes numerically trivial line bundles by \cite[Theorem 9.6.3, Exercise 9.6.11, and Proposition 9.6.12]{Kleiman}, together with the fact that numerical triviality is preserved under extension of the base field, which is checked by a descent argument.

Assume first that $\dim \Pic^0_{Y/F}=0$. Then $\Pic^0_{Y/F}$ is finite, and hence $\Pic^{\tau}_{Y/F}$ is also finite. Every numerically trivial line bundle on $Y$ is therefore torsion. Tensoring the natural surjection $\Pic(Y)\to N^1(Y)$ with $\Q$ gives the isomorphism in (1).

Assume now that $F$ is uncountable and algebraically closed, that the map in (1) is an isomorphism, and that $\dim \Pic^0_{Y/F}>0$. Let $A\coloneqq (\Pic^0_{Y/F})_{\rm red}$. Since $Y$ is normal, by \cite[Theorem 9.5.4]{Kleiman}, $A$ is an abelian variety of positive dimension. The group of points of $A$ rational over $F$, denoted by $A(F)$, is uncountable, whereas
\[
 A(F)_{\mathrm{tors}}=\bigcup_{n>0}A[n](F)
\]
is countable. Here $A[n]$ denotes the kernel of multiplication by $n$ on
$A$, and the group scheme $A[n]$ is finite for every $n>0$. Consequently,
$A(F)\subset\Pic^0_{Y/F}(F)$ contains a point of infinite order and it corresponds to a numerically trivial line bundle of infinite order on $Y$. The class of this line bundle is a nonzero element of $\Pic(Y)_{\Q}$ in the kernel of $\Pic(Y)_{\Q}\to N^1(Y)_{\Q}$, contradicting the injectivity.
\end{proof}

We next extend a finite collection of varieties, rational maps, line bundles, and sections from the generic fiber to an open subset of the base.

\begin{lemma}\label{lem:models-open}
Let $T$ be an integral variety with function field $K$, and let $Y_{1,\eta},\ldots,Y_{r,\eta}$ be normal projective $K$-varieties. Suppose that we are given finitely many rational maps among the $Y_{i,\eta}$, finitely many invertible sheaves on these varieties, and finitely many sections of these sheaves. Then, after replacing $T$ by a nonempty open subset $U$, there exist projective morphisms $Y_i\to U$ with $Y_i$ normal and generic fibers $Y_{i,\eta}$, together with extensions of all the given rational maps, invertible sheaves, and sections.

If a normal projective model of some $Y_{i,\eta}$ is already defined over $T$, then we may use its restriction to $U$.
\end{lemma}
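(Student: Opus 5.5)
The argument is a standard spreading-out ("limit") argument. It rests on two facts: $K$ is the direct limit of the coordinate rings $\OO(U)$ over nonempty affine opens $U\subset T$, and finitely presented data over $K$ descend to some $\OO(U)$ (EGA IV, \S8).

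First, shrink $T$ to be affine, $T=\Spec A$ with $A$ a finitely generated domain and $K=\operatorname{Frac}(A)$. Then $K=\varinjlim_{0\neq a\in A} A_a$.

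\textbf{Step 1: models.} Embed each $Y_{i,\eta}$ in some $\P^{n_i}_K$ and write it as the zero locus of finitely many homogeneous polynomials. Only finitely many denominators appear, so after inverting a single $a$ these equations define a closed subscheme $Y_i\subset\P^{n_i}_{U}$, with $U=\Spec A_a$, whose generic fiber is $Y_{i,\eta}$. The morphism $Y_i\to U$ is projective.

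\textbf{Step 2: normality and flatness.} We want $Y_i$ normal. By generic flatness, after shrinking $U$ we may assume $Y_i\to U$ is flat. The normal locus of $Y_i$ is open, because $Y_i$ is of finite type over a field and hence excellent. This locus contains the generic fiber, since the local rings of $Y_i$ at points of $Y_{i,\eta}$ are local rings of the normal scheme $Y_{i,\eta}$. Its complement $Z$ is therefore closed and disjoint from $Y_{i,\eta}$. As $Y_i\to U$ is proper, the image of $Z$ is a closed subset not containing $\eta$, and we remove it from $U$. If a normal projective model over $T$ is already given, we skip Steps 1--2 for it and simply restrict to $U$.

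\textbf{Step 3: the remaining data.}
\begin{enumerate}[(a)]
\item Invertible sheaves. Choose a finite open cover of $Y_{i,\eta}$ by standard affines trivializing the sheaf, and record the cocycle as finitely many units. These spread out to units after inverting finitely many elements, and the cocycle condition is a finite set of identities, which holds after further shrinking. Alternatively, use $\Pic(Y_{i,\eta})=\varinjlim_U\Pic(Y_{i,U})$, which follows from EGA IV 8.5.2 and 8.5.5 since $Y_{i,\eta}=\varprojlim Y_{i,U}$ with affine transition maps.
\item Sections. Use the same limit statement $H^0(Y_{i,\eta},L_\eta)=\varinjlim_U H^0(Y_{i,U},L_U)$, which holds because cohomology of quasi-coherent sheaves commutes with this limit for qcqs schemes.
\item Rational maps. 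A rational map $Y_{i,\eta}\dashrightarrow Y_{j,\eta}$ is a morphism $W_\eta\to Y_{j,\eta}$ from a dense open $W_\eta$. Spread out $W_\eta$ to an open $W\subset Y_{i,U}$ whose generic fiber is $W_\eta$; one can take the complement of the closure of the complement. Then spread out the morphism using EGA IV 8.8.2, since $Y_j$ is of finite presentation over $U$. Because $Y_i$ is integral, the morphism $W\to Y_j$ defines a rational map $Y_i\dashrightarrow Y_j$ over $U$ extending the given one.
\end{enumerate}

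Since there are finitely many data, finitely many shrinkings suffice.

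I expect the main obstacle to be Step 2: ensuring $Y_i$ is normal, rather than merely having a normal generic fiber, while keeping the generic fiber unchanged. The openness of the normal locus together with properness handles this. Everything else is a direct application of the finite-presentation limit theorems of EGA IV \S8.
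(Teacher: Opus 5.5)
Your argument is correct, but it reaches the models and the rational maps by a different route from the paper; the invertible sheaves and sections are handled the same way.

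\textbf{Models.} The paper does not shrink at this stage. It takes the closure $\overline Y_i$ of $Y_{i,\eta}$ in $\P^{N_i}_T$ with the reduced structure, which is automatically integral and projective over $T$. It then replaces $\overline Y_i$ by its normalization. This is finite by excellence, and it has generic fiber $Y_{i,\eta}$ because normalization commutes with localization and $Y_{i,\eta}$ is already normal. You instead spread out the equations and shrink using openness of the normal locus together with properness. This also rests on excellence (openness of the normal locus rather than finiteness of normalization). It costs one extra shrinking, which is harmless since the lemma permits shrinking anyway.

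\textbf{Rational maps.} The paper takes the closure of the graph $\Gamma_\eta$ in $Y_i\times_T Y_j$. This is the picture reused in the proof of Lemma \ref{lem:small-open}, which works with the normalized graph closure. You spread out a morphism on a dense open $W$ via EGA IV 8.8.2. That keeps everything inside the limit formalism and gives an explicit domain of definition whose generic fiber is $W_\eta$.

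\textbf{Sheaves and sections.} Your treatment matches the paper's, which uses the same limit statements (Stacks Tags 0B8W and 01ZR).

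\textbf{A gap to close.} In (c) you use that $Y_i$ is integral without justifying it; the paper gets this for free from the reduced closure. In your setup it follows from the flatness you arranged in Step 2. Flatness over the integral base forces every generic point of an irreducible component of $Y_i$ to lie over $\eta$. Hence $Y_i$ is irreducible because $Y_{i,\eta}$ is, and normality gives reducedness. You should state this.

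\textbf{A smaller point.} If you use the cocycle description in (a), the spread-out standard opens cover $Y_{i,U}$ only after a further shrinking. This follows from the same properness argument as in Step 2. The limit description of $\Pic$ that you also mention avoids the issue.
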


\begin{proof}
Choose projective embeddings $Y_{i,\eta}\hookrightarrow\mathbb P^{N_i}_K$. Let $\overline Y_i$ be the closure of $Y_{i,\eta}$ in $\mathbb P^{N_i}_T$, equipped with the reduced induced structure. The scheme $\overline Y_i$ is integral and projective over $T$. Since $\overline Y_i$ is of finite type over a field, it is excellent. Let $Y_i$ be the normalization of $\overline Y_i$. The morphism $Y_i\to\overline Y_i$ is finite, and hence $Y_i$ is projective over $T$. Since $Y_{i,\eta}$ is normal, the generic fiber of $Y_i\to T$ is $Y_{i,\eta}$.

For each rational map $\phi_{\eta}\colon Y_{i,\eta}\dashrightarrow Y_{j,\eta}$, let $\Gamma_{\eta}\subset Y_{i,\eta}\times_K Y_{j,\eta}$ be the closure of its graph, and let $\Gamma\subset Y_i\times_T Y_j$ be its closure over $T$. The projection $\Gamma\longrightarrow Y_i$ is birational, while the morphism $\Gamma\longrightarrow Y_j$ induces a rational map $\phi\colon Y_i\dashrightarrow Y_j$ whose restriction to the generic fiber is $\phi_{\eta}$. The invertible sheaves extend after shrinking $T$ by \cite[Lemma 32.10.3(2), Tag 0B8W]{Stacks}. Let $L_{\eta}$ be one of the given invertible sheaves on $Y_{i,\eta}$, and let $L$ be its extension to $Y_i$. Each given section of $L_{\eta}$ corresponds to an $\OO_{Y_{i,\eta}}$-module homomorphism $\OO_{Y_{i,\eta}}\to L_{\eta}$. Since $\OO_{Y_i}$ and $L$ are of finite presentation, \cite[Lemma 32.10.2(2), Tag 01ZR]{Stacks} extends this homomorphism to $\OO_{Y_i}\to L$ after a further shrinking of $T$. The extended homomorphism defines a section of $L$ extending the given section. Since only finitely many sections are given, all of them extend over a common nonempty open subset of $T$.

If a normal projective model of some $Y_{i,\eta}$ is fixed a priori over $T$, then we use its restriction after replacing $T$ by a nonempty open subset.
\end{proof}

The next lemma shows that a birational map which is small on the generic fiber remains small after a suitable restriction of the base.

\begin{lemma}\label{lem:small-open}
Let $\phi_\eta\colon Y_\eta\dashrightarrow Z_\eta$ be a birational map between normal projective varieties over $K=k(T)$, and assume that $\phi_\eta$ is an isomorphism in codimension one. Let $\phi\colon Y\dashrightarrow Z$ be an extension over a nonempty open subset of $T$, where $Y$ and $Z$ are normal and projective over the base. After shrinking the base, the map $\phi$ is an isomorphism in codimension one.
\end{lemma}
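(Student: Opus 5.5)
We prove the lemma by identifying the exceptional loci of $\phi$ and $\phi^{-1}$, and showing that after shrinking $T$ they have codimension at least two. Let $\Gamma\subset Y\times_T Z$ be the closure of the graph of $\phi$, with projections $p\colon\Gamma\to Y$ and $q\colon\Gamma\to Z$. Both projections are projective and birational.

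Since $Y$ is normal, the map $\phi$ is defined at every point of $Y$ where $p$ has finite (hence, by Zariski's main theorem, trivial) fibers. Let $E_Y\subset Y$ be the closed subset over which $p$ fails to be an isomorphism together with the locus $p(\operatorname{Exc}(q))$. Then $\phi$ restricts to an isomorphism from $Y\setminus E_Y$ onto an open subset of $Z$; concretely, $E_Y$ is the complement of the largest open subset on which $\phi$ is a local isomorphism. Define $E_Z\subset Z$ symmetrically using $\phi^{-1}$. It suffices to show that, after shrinking $T$, $\codim_Y E_Y\ge 2$ and $\codim_Z E_Z\ge 2$, since then $\phi$ is an isomorphism between the open sets $Y\setminus E_Y$ and $Z\setminus E_Z$, whose complements have codimension at least two.

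The key point is compatibility with restriction to the generic fiber. Taking the graph closure commutes with the flat base change $\Spec K\to T$, because closure of a subset whose generic points lie over $\eta$ restricts to the closure in the generic fiber. The same holds for the isomorphism locus of $p$ and $q$, since it is determined by the graph, so $E_Y\cap Y_\eta$ is exactly the exceptional locus of $\phi_\eta$. By hypothesis this locus has codimension at least two in $Y_\eta$.

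Now decompose $E_Y$ into irreducible components. A component dominating $T$ has its generic point in $Y_\eta$, and its codimension in $Y$ equals the codimension of its generic fiber in $Y_\eta$, which is at least two. A component not dominating $T$ has closure of its image a proper closed subset of $T$; there are finitely many such components, so we remove their images from $T$. Doing the same for $E_Z$ and shrinking $T$ to the intersection of the resulting open sets gives the claim.

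The main obstacle is the compatibility step: the exceptional locus of the extension must restrict to that of $\phi_\eta$, rather than merely contain it. I would handle this via the graph description above. Openness of the isomorphism locus of a proper morphism is local on the target, and forming it commutes with flat base change by flat base change for $p_*\OO_\Gamma$ and for the fiber dimension. This makes $(E_Y)_\eta=\operatorname{Exc}(\phi_\eta)$ a formal consequence.
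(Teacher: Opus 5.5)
Your proof is correct. Its skeleton matches the paper's: the part of the bad locus dominating $T$ is controlled by $\phi_\eta$, and the finitely many vertical pieces are removed by shrinking $T$, once for $\phi$ and once for $\phi^{-1}$.

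The difference is in what is tracked. The paper looks only at prime divisors contracted by $\phi$, via the normalized graph. A horizontal contracted divisor $E$ gives a divisor $E_\eta$ contracted by $\phi_\eta$, which is just a dimension count on the generic fiber. Finiteness of the vertical ones comes from their strict transforms being among the $q$-exceptional divisors. Normality of $Y$ and $Z$ is then used at the end to pass from ``neither $\phi$ nor $\phi^{-1}$ contracts a divisor'' to ``isomorphism in codimension one''.

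You instead track the whole closed exceptional locus $E_Y$. This makes finiteness of the vertical components automatic, since $E_Y$ is a closed subset of a Noetherian scheme, and the final conclusion immediate. The price is the spreading-out inclusion $E_Y\cap Y_\eta\subseteq\operatorname{Exc}(\phi_\eta)$, which is exactly what you need at the generic point of a horizontal component; you correctly flag this as the main point.

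Your flat base change justification is a little quick on the $q$ side. It requires identifying the locus where $q$ is not a local isomorphism with the preimage of the locus of $Z$ over which $q$ is not an isomorphism, and that identification uses normality of $Z$ and connectedness of fibers. A cleaner argument is available. For $y$ lying over $\eta$ one has $\OO_{Y,y}=\OO_{Y_\eta,y}$, and likewise on $Z$. A birational map between integral schemes of finite type over $T$ is a local isomorphism at $y$ exactly when it is defined at $y$ and induces an isomorphism $\OO_{Z,\phi(y)}\to\OO_{Y,y}$. Both conditions spread out from $\Spec\OO_{Y,y}$ to a neighborhood of $y$. Hence $\phi_\eta$ and $\phi$ have the same local-isomorphism points over $\eta$.
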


\begin{proof}
By the valuative criterion of properness, the rational map $\phi$ is
defined at the generic point of every prime divisor of the normal variety
$Y$. Let $E\subset Y$ be a prime divisor contracted by $\phi$, and let
$\Gamma$ be the normalization of the closure of the graph of $\phi$, with
projections $p\colon\Gamma\to Y$ and $q\colon\Gamma\to Z$. The strict transform $\widetilde E\subset \Gamma$ of $E$ is not contracted by $p$ and is contracted by $q$. If $E$ dominates $T$, then $E_\eta$ and $\widetilde E_\eta$ are prime divisors, and $\phi_\eta$ contracts $E_\eta$. This contradicts the assumption that $\phi_\eta$ is an isomorphism in codimension one. Hence, every prime divisor of $Y$ contracted by $\phi$ is vertical. Only finitely many such divisors occur, since their strict transforms are among the finitely many prime divisors contracted by $q$.

The proper morphism $Y\to T$ maps each vertical prime divisor contracted by $\phi$ to a proper closed subset of $T$. After removing the union of these images from $T$, the restricted map $\phi$ contracts no prime divisor. After a further shrinking of the base, the same argument for $\phi^{-1}$ shows that neither $\phi$ nor $\phi^{-1}$ contracts a prime divisor. Since $Y$ and $Z$ are normal, $\phi$ is an isomorphism in codimension one.
\end{proof}

The following lemma allows us to assume, after shrinking the base, that the base is smooth and all fibers of the projective models are geometrically integral.

\begin{lemma}[{\protect\cite[Lemma 33.25.8, Tag 0B8X, Proposition 29.28.1, Tag 052A, Lemma 37.27.5, Tag 0559, and Lemma 37.26.7, Tag 0C0E]{Stacks}}]\label{lem:open-integral}
Let $k$ be a perfect field, let $T$ be an integral $k$-variety, and let $g\colon Y\to T$ be a projective morphism whose geometric generic fiber is integral. After replacing $T$ by a nonempty open subset, the variety $T$ is smooth, the morphism $g$ is flat, and every geometric fiber of $g$ is integral.
\end{lemma}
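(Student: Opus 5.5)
This lemma is a compilation of standard results from the Stacks Project, and I would prove it by applying them successively, shrinking $T$ at each step. Since $\bigcap$ of finitely many nonempty opens in the integral $T$ is again nonempty, the order of shrinking is harmless.

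\textbf{Smoothness of the base.} Since $k$ is perfect and $T$ is an integral $k$-variety, $T$ is generically reduced and $k(T)/k$ is separable. The smooth locus of $T$ is therefore a dense open subset (Tag 052A-type statements, or the fact that geometrically reduced varieties over a perfect field are generically smooth). I replace $T$ by this open set.

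\textbf{Flatness.} I apply generic flatness (Tag 052A) to the finite type morphism $g\colon Y\to T$ over the integral base $T$. This gives a nonempty open $U\subset T$ over which $g$ is flat. I shrink $T$ to $U$.

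\textbf{Integrality of geometric fibers.} This is the main point. I would use the constructibility results for the set
\[
E=\{t\in T: Y_t \text{ is geometrically integral}\}.
\]
For a flat, finitely presented (here projective) morphism, this set is constructible; this is Lemma 37.27.5 (Tag 0559) for geometric reducedness and Lemma 37.26.7 (Tag 0C0E) for geometric irreducibility. By hypothesis the geometric generic fiber is integral, so the generic point $\eta$ lies in $E$. A constructible subset of an irreducible Noetherian space containing the generic point contains a dense open subset, so $E$ contains a nonempty open set. Shrinking $T$ once more, every fiber $Y_t$ is geometrically integral, i.e. $Y_{\bar t}$ is integral for every geometric point $\bar t$.

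The one place needing care is to invoke the constructibility statements in their geometric forms (geometrically reduced and geometrically irreducible) rather than for reducedness or irreducibility of the scheme-theoretic fibers; Tag 0B8X handles passing from the generic fiber to a neighborhood. With that, the three shrinkings combine to give the lemma.
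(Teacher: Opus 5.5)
Your proposal is correct and is essentially the paper's argument. The paper gives no proof beyond citing Stacks results for generic smoothness of $T$ over the perfect field $k$, generic flatness of $g$, and spreading out geometric reducedness and geometric irreducibility from the generic fiber; you get the last of these equally validly through constructibility of the geometrically integral locus (true for any morphism of finite presentation, so flatness is not needed there) and the fact that a constructible set containing the generic point contains a dense open. The only slips are bibliographic: Tag 052A is generic flatness and does not give smoothness of $T$, and the paper cites Tag 0B8X as the smoothness input for $T$, not as a tool for passing from the generic fiber to a neighborhood.
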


\subsection{Countable fields of definition}\label{subsec:countable}
The next two lemmas relate a sufficiently general $k$-point to the geometric generic point of a model over a countable field.

\begin{lemma}\label{lem:countable-model}
Let $k$ be an algebraically closed field, let
$f\colon X\to T$ be a projective morphism between normal integral
$k$-varieties, and let $S\subset T(k)$ be a subset not contained in any
countable union of proper closed subsets of $T$. Then there exist a
countable algebraically closed subfield $k_0\subset k$, a projective
morphism $f_0\colon X_0\to T_0$ between normal integral $k_0$-varieties
with $T_0$ geometrically integral, and a point $s\in S$ such that
$(f_0)_k\simeq f$ and the morphism $\Spec k\to T_0$ determined by $s$ has
image equal to the generic point of $T_0$.
\end{lemma}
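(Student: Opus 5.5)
The plan is a standard spreading-out argument combined with a countability count. First, $X$, $T$ and $f$ are of finite type over $k$, so they are defined by finitely many polynomial equations and transition data. Let $k_1\subset k$ be the subfield generated over the prime field by all coefficients appearing in a finite presentation of $T$, of $X$ (say via projective embeddings $X\subset \P^N_T$ and affine charts of $T$), and of $f$. Let $k_0$ be the algebraic closure of $k_1$ inside $k$. Then $k_0$ is countable and algebraically closed, and we obtain $f_0\colon X_0\to T_0$ over $k_0$ with $(f_0)_k\simeq f$.

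Next I check that $X_0$ and $T_0$ are normal and integral and that $T_0$ is geometrically integral. Since $k_0$ is algebraically closed, integrality and normality descend and ascend along $k_0\subset k$. Concretely, $X_0\times_{k_0}k\simeq X$ is integral and normal, and faithfully flat descent gives the same for $X_0$. Geometric integrality of $T_0$ is automatic over an algebraically closed field once $T_0$ is integral. Projectivity of $f_0$ holds by construction from the embedding.

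The key step is choosing $s$. Consider the proper closed subsets $Z_0\subsetneq T_0$ defined over $k_0$. Each is cut out by finitely many polynomials with coefficients in $k_0$, and $k_0$ is countable, so there are only countably many such $Z_0$. Their base changes $(Z_0)_k$ are proper closed subsets of $T$, since dimension is preserved under field extension. By hypothesis $S$ is not contained in $\bigcup_{Z_0}(Z_0)_k$, so we may pick $s\in S$ outside all of them.

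Let $t\in T_0$ be the image of $\Spec k\to T\to T_0$ determined by $s$. If $t$ were not the generic point, its closure $\overline{\{t\}}$ would be a proper closed subset $Z_0$ of $T_0$. The point $s$ would then lie in the preimage of $Z_0$ in $T$, which is $(Z_0)_k$, a contradiction. Hence $t$ is the generic point.

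The main obstacle is the bookkeeping in this last step. I must verify that the preimage of $Z_0$ under the projection $T=T_0\times_{k_0}k\to T_0$ is exactly $(Z_0)_k$ and that it is proper closed. Both follow from flatness of the base change together with the fact that a proper closed subset of an irreducible variety has strictly smaller dimension, which is invariant under field extension.
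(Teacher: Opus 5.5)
Your proposal is correct and follows essentially the same route as the paper: descend $f$ to a finitely generated subfield, pass to its algebraic closure $k_0$ in $k$ (which is countable), descend normality and integrality by faithful flatness, and then choose $s\in S$ outside the countably many base changes of proper closed subsets of $T_0$, which forces the image of $s$ to be the generic point. Your dimension argument showing that each $(Z_0)_k$ is a proper closed subset of $T$ spells out a point the paper leaves implicit.
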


\begin{proof}
By descent for schemes and morphisms of finite presentation, there is a finitely generated subfield
$k_1\subset k$ and a projective morphism $f_1\colon X_1\to T_1$ whose base
change to $k$ is $f$, see \cite[Lemma 32.10.1, Tag 01ZM]{Stacks}. Let $k_0$
be the algebraic closure of $k_1$ in $k$, and let
$f_0\colon X_0\to T_0$ be the base change $(f_1)_{k_0}$. The field $k_0$ is countable
and algebraically closed. Normality descends under faithfully flat field
extensions by \cite[Lemma 10.164.3, Tag 033G]{Stacks}. The varieties $X_0$
and $T_0$ are therefore normal, and they are integral since their base
changes to $k$ are integral. Since $k_0$ is algebraically closed, the
integral $k_0$-variety $T_0$ is geometrically integral.

Since $T_0$ is of finite type over the countable field $k_0$, the variety $T_0$ has only countably many proper closed subsets. The base changes of these closed subsets to $k$ form a countable collection of proper closed subsets of $T$. Choose $s\in S$ outside their union. If the morphism $\Spec k\to T_0$ determined by $s$ does not map to the generic point, then the closure of its image is a proper closed subset of $T_0$ whose base change belongs to the collection excluded by the choice of $s$, which is a contradiction.
\end{proof}

\begin{lemma}\label{lem:field-comparison}
Let $k$ be an uncountable algebraically closed field, let $k_0\subset k$ be a countable algebraically closed subfield, and let $T_0$ be a geometrically integral $k_0$-variety. Let $T\coloneqq T_0\times_{k_0}k$ and $K_0\coloneqq k_0(T_0)$. If $s\in T(k)$ maps to the generic point of $T_0$, then there is a $K_0$-isomorphism $\sigma\colon k\to \overline{k(T)}$ compatible with the embeddings determined by $s$ and by the generic point.
\end{lemma}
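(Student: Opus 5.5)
The isomorphism will come from transcendence bases and the uniqueness of algebraic closures. The two embeddings of $K_0$ are as follows. The point $s$ gives a morphism $\Spec k\to T_0$ with image the generic point $\eta_0$ of $T_0$. This yields a $k_0$-embedding $\iota_s\colon K_0=\OO_{T_0,\eta_0}\hookrightarrow k$. On the other side, the generic point of $T=T_0\times_{k_0}k$ maps to $\eta_0$, since $T\to T_0$ is flat and generic points map to generic points. This gives $\iota_\eta\colon K_0\hookrightarrow k(T)\subset\overline{k(T)}$.

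We must produce a field isomorphism $\sigma\colon k\to\overline{k(T)}$ with $\sigma\circ\iota_s=\iota_\eta$. Both $k$ (via $\iota_s$) and $\overline{k(T)}$ (via $\iota_\eta$) are algebraically closed extensions of $K_0$. By Steinitz, an algebraically closed field is determined up to isomorphism over a subfield $K_0$ by its transcendence degree over $K_0$. More precisely, if $B\subset k$ and $B'\subset\overline{k(T)}$ are transcendence bases over $K_0$ of the same cardinality, then a bijection $B\to B'$ induces a $K_0$-isomorphism $K_0(B)\to K_0(B')$. This extends to the algebraic closures $k=\overline{K_0(B)}$ and $\overline{k(T)}=\overline{K_0(B')}$ by uniqueness of algebraic closure. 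So it suffices to show that the two transcendence degrees over $K_0$ are equal.

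\emph{Computing the transcendence degrees.} Let $d=\dim T_0$. Since $K_0$ is countable (finitely generated over the countable field $k_0$) and $k$ is uncountable, $\operatorname{trdeg}(k/K_0)=\operatorname{trdeg}(k/k_0)-d$. Since $k_0$ is countable, the first term is $|k|$: an algebraic extension of a field with a transcendence basis of cardinality $\kappa$ has cardinality at most $\max(\kappa,\aleph_0)$, so $\kappa=|k|$ because $k$ is uncountable. Subtracting the finite number $d$ from an infinite cardinal leaves it unchanged, so $\operatorname{trdeg}(k/K_0)=|k|$. For the other side, $k(T)$ is the fraction field of $K_0\otimes_{k_0}k$, since $T_0$ is geometrically integral. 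Hence $\operatorname{trdeg}(k(T)/k_0)=\operatorname{trdeg}(k/k_0)+d=|k|$. Also $|\overline{k(T)}|=|k|$, and therefore $\operatorname{trdeg}(\overline{k(T)}/K_0)=|k|$ by the same subtraction.

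The main obstacle is this cardinal bookkeeping, in particular checking that the transcendence degree of $k$ over the countable subfield $K_0$ really equals $|k|$. Uncountability is exactly what is needed here: if $k$ were countable, both transcendence degrees could be finite and might differ by $d$. Once the transcendence degrees agree, the compatibility $\sigma\circ\iota_s=\iota_\eta$ is automatic, because the isomorphism is built over $K_0$ at every stage.
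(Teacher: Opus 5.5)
Your proposal is correct and follows essentially the same route as the paper: in both, each side is shown to have transcendence degree over $K_0$ equal to the uncountable invariant $\mathrm{trdeg}_{k_0}k$ (which you identify with $|k|$), using additivity with the finite $d=\mathrm{trdeg}_{k_0}K_0$ and, for $\overline{k(T)}$, the fact that $k(T)=\mathrm{Frac}(K_0\otimes_{k_0}k)$ coming from geometric integrality. Both arguments then match transcendence bases over $K_0$ and extend the resulting isomorphism to the algebraic closures. The paper phrases the middle step as linear disjointness of $K_0$ and $k$ over $k_0$, which is the same content.
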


\begin{proof}
The point $s$ and the generic point of $T$ define embeddings $\iota_s\colon K_0\hookrightarrow k$ and $\iota_\eta\colon K_0\hookrightarrow\overline{k(T)}$. Put $\kappa\coloneqq \mathrm{trdeg}_{k_0}k$. Then $\kappa$ is uncountable, since an algebraic extension of a purely transcendental extension of a countable field with countable transcendence basis is countable.

The extension $K_0/k_0$ has finite transcendence degree, and hence $\mathrm{trdeg}_{(K_0,\iota_s)}k=\kappa$.
Since $T_0$ is geometrically integral, $K_0/k_0$ is regular. In particular,
$K_0$ and $k$ are linearly disjoint over $k_0$, and $\mathrm{trdeg}_{(K_0,\iota_\eta)}\overline{k(T)}=\kappa$.
Choose transcendence bases for the two extensions and fix a bijection
between the bases. The resulting $K_0$ isomorphism between the corresponding
purely transcendental extensions extends to an isomorphism between their
algebraic closures. Since $k$ and $\overline{k(T)}$ are algebraic closures
of these purely transcendental extensions, this extension gives the
required isomorphism $\sigma$. See also \cite[Lemma 2.13]{FLTZ}.
\end{proof}

\subsection{Cycles of rational curves}\label{subsec:cycles}
We first describe the Picard group and cohomology of a cycle of rational curves.

\begin{definition}[{\protect\cite[II, 1.1]{DR73}}]\label{def:ngon}
Let $\widetilde C=\P^1_{\Z}\times\Z/n$ be the disjoint sum of $n$ copies of
$\P^1_{\Z}$, indexed by $\Z/n$ ($n\geq1$). Gluing the $j$-th copy of
$\P^1_{\Z}$ to the $(j+1)$-st, by identifying the section $0$ of the $j$-th copy
with the section $\infty$ of the $(j+1)$-st, one obtains a curve $C$ of genus
one over $\Spec\Z$, with normalization $\widetilde C$. For every scheme $S$, the
\emph{standard $n$-gon over $S$} is the $S$-scheme $C\times_{\Z}S$.
\end{definition}

\begin{lemma}\label{lem:cycle-cohomology}
Let $S$ be a scheme, let $n\geq2$ be an integer, and let $A$ be the standard
$n$-gon over $S$ of Definition \ref{def:ngon}. For $j\in\Z/n$, let
$A_j\subset A$ be the image of the $j$-th copy of $\P^1_S$, and let
$q_j\in A(S)$ be the image of its section $0$. Put
$U_j\coloneqq A_j\setminus(q_{j-1}\cup q_j)$, so that
$U_j\simeq\mathbb G_{\mathrm m,S}$. For every $S$-scheme $T$, let $\Pic^{[0]}_{A/S}(T)$ consist of the classes in $\Pic_{A/S}(T)$ whose pullback to $A\times_S\bar t$ has multidegree zero for every geometric point $\bar t\to T$. Then the following statements hold.
\begin{enumerate}
 \item For every $S$-scheme $T$, there is a group isomorphism
 \[
  c_T\colon\Pic^{[0]}_{A/S}(T)\xrightarrow{\ \sim\ }\Gamma(T,\OO_T^\times),
 \]
 natural in $T$. In particular, $\Pic^{[0]}_{A/S}$ is representable by
 $\mathbb G_{\mathrm m,S}$.
 \item For every $i\in\Z/n$ and every section $p_0\in U_i(S)$, the Abel map
 \[
  \mathrm{ab}_{p_0}\colon U_i\longrightarrow\Pic^{[0]}_{A/S},
  \qquad
  p\longmapsto\OO_A(p_0-p),
 \]
 is an isomorphism of $S$-schemes.
 \item Suppose that $S=\Spec k$ for some field $k$. Then
 $\Pic^{[0]}_{A/k}=\Pic^0_{A/k}$. Furthermore, if $M$ is a line bundle of
 multidegree zero on $A$, then
 \[
  (h^0(A,M),h^1(A,M))=
  \begin{cases}
   (1,1),& M\simeq\OO_A,\\
   (0,0),& M\not\simeq\OO_A.
  \end{cases}
 \]
\end{enumerate}
\end{lemma}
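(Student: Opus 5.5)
The plan is to compute $\Pic^{[0]}_{A/S}$ through the normalization $\nu\colon\widetilde A=\P^1_S\times\Z/n\to A$ and the gluing data at the $n$ nodes $q_j$. For an $S$-scheme $T$, a line bundle $L$ on $A_T$ with fiberwise multidegree zero pulls back along $\nu$ to a line bundle on each copy $\P^1_T$ of degree zero on every fiber. Zariski locally on $T$, such a line bundle is trivial; globally it is a pullback from $T$. Hence $L$ is determined by line bundles $N_j$ on $T$ together with isomorphisms at the nodes, $(N_j)_{0}\simeq(N_{j+1})_{\infty}$. Since $\OO_A\to\nu_*\OO_{\widetilde A}$ has cokernel supported at the sections $q_j$, with value $\OO_T$ at each node, the standard gluing sequence
\[
1\to\OO_T^\times\to\prod_j\OO_T^\times\to\prod_{j}\OO_T^\times\to R^1\pi_*\OO_A^\times\to\cdots
\]
identifies the multidegree-zero part of the relative Picard functor with the cokernel of the map $(a_j)\mapsto(a_j a_{j+1}^{-1})_j$. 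This cokernel is $\OO_T^\times$ via the product map $(b_j)\mapsto\prod_j b_j$.

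To make $c_T$ canonical and natural, I will define it directly. Choose trivializations of $\nu^*L$ on each component, which are unique up to $\Gamma(T,\OO_T^\times)$ locally on $T$. Then let $c_T(L)$ be the product around the cycle of the gluing scalars; this is the monodromy of $L$ around the cycle. It is independent of the choices, multiplicative, and compatible with base change, and it sheafifies correctly. Injectivity follows because trivial monodromy lets us rescale to a global trivialization. Surjectivity comes from gluing trivial bundles with one scalar $u$ at a single node. Representability by $\mathbb G_{\mathrm m,S}$ is then immediate.

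For (2), I will compute $c(\OO_A(p_0-p))$ for $p,p_0\in U_i$. On the component $A_i\simeq\P^1$ with coordinate $t$, the rational function $g=(t-p)/(t-p_0)$, suitably oriented, trivializes $\OO_A(p_0-p)$ on $A_i$. The constant $1$ trivializes it on the other components. The monodromy is then the ratio $g(0)/g(\infty)=p/p_0$, or its inverse. Thus $\mathrm{ab}_{p_0}$ becomes $p\mapsto p_0^{\pm1}p^{\mp1}$, an automorphism of $\mathbb G_{\mathrm m,S}$, and naturality in $T$ upgrades this from points to an isomorphism of schemes. The main care is the sign and orientation bookkeeping. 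Since any such formula is an isomorphism, this is harmless.

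For (3), over a field $\Pic^{[0]}\simeq\mathbb G_m$ is connected and open in $\Pic_{A/k}$, because multidegree is locally constant. It contains the identity, and the other multidegree components are disjoint from it, so it equals $\Pic^0$. For the cohomology, $\chi(M)=\deg M+\chi(\OO_A)=0$, since $h^0(\OO_A)=1$ and $h^1(\OO_A)=1$ (arithmetic genus one). This gives $(1,1)$ for $M\simeq\OO_A$. If $M\not\simeq\OO_A$, a nonzero section restricts on each component to a degree-zero section, which is constant. Compatibility at the nodes forces the constants to match via the gluing scalars, which is possible only if the monodromy is $1$. So $h^0=0$, and then $h^1=0$ because $\chi=0$.
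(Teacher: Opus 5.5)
Your proposal is correct and follows essentially the same route as the paper. Both arguments use the normalization and the gluing data at the nodes, take the monodromy (product of gluing scalars) as $c_T$, compute $c\circ\mathrm{ab}_{p_0}$ as a coordinate on $\mathbb G_{\mathrm m}$ via an explicit rational function on $A_i$, get $\Pic^{[0]}_{A/k}=\Pic^0_{A/k}$ from connectedness and local constancy of the multidegree, and handle the cohomology with constant sections plus $\chi(M)=0$.

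The differences are minor. You pass to the relative Picard functor by noting that the presheaf map is already an isomorphism onto the sheaf $\OO_T^\times$, whereas the paper uses the section $q_1$ and $\pi_*\OO_{A_T}=\OO_T$ to write $\Pic_{A/S}(T)=\Pic(A_T)/\pi^*\Pic(T)$. You also obtain $\chi(M)=0$ from Riemann--Roch with $p_a(A)=1$ rather than directly from the normalization sequence.
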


\begin{proof}
Let $\nu\colon A^\nu=\coprod_{j\in\Z/n}A_j\to A$ be the base change of
$\widetilde C\to C$, which is the normalization on every geometric fiber. By the
construction of $C$, the sequence
\begin{equation}\label{eq:ngon-glueing}
 0\longrightarrow\OO_A\longrightarrow\nu_*\OO_{A^\nu}
 \xrightarrow{\ \delta\ }\bigoplus_{j\in\Z/n}q_{j*}\OO_S\longrightarrow0
\end{equation}
is exact, where $\delta$ sends a section $(a_j)_j$ to
$(a_j|_{q_j}-a_{j+1}|_{q_j})_j$; surjectivity of $\delta$ may be checked at the pairwise disjoint sections $q_1,\dots,q_n$, where it follows from the surjectivity of the restriction map $\OO_{A_j}\to q_{j*}\OO_S$.
Since $\bigoplus_jq_{j*}\OO_S$ is flat over $S$, the sequence
\eqref{eq:ngon-glueing} remains exact after arbitrary base change $T\to S$. The
subfunctor $\Pic^{[0]}_{A/S}$ is a subgroup functor, since the multidegree on a
geometric fiber is additive and is compatible with pullback along any morphism
$T'\to T$. We first compute $\Pic^{[0]}_{A/S}(T)$ for every $S$-scheme $T$.

Let $T$ be an arbitrary $S$-scheme, and let
$\pi\colon A_T\coloneqq A\times_ST\to T$ be the projection. Pushing the base
change of \eqref{eq:ngon-glueing} forward along $\pi$, and using
$(\pi_j)_*\OO_{A_j\times_ST}=\OO_T$ for every $j$, where
$\pi_j\colon A_j\times_ST\simeq\P^1_T\to T$ is the projection, we obtain a left
exact sequence
\[
0\longrightarrow\pi_*\OO_{A_T}\longrightarrow\bigoplus_{j\in\Z/n}\OO_T
 \xrightarrow{\ (a_j)_j\mapsto(a_j-a_{j+1})_j\ }\bigoplus_{j\in\Z/n}\OO_T,
\]
whose second map has the diagonal copy of $\OO_T$ as its kernel. Hence
$\pi_*\OO_{A_T}=\OO_T$. By \cite[Lemma 44.4.3, Tag 0B9N]{Stacks} applied to the
section $q_1\in A(S)$, we therefore have a natural isomorphism
$\Pic_{A/S}(T)\simeq\Pic(A_T)/\pi^*\Pic(T)$. In particular, every relative
Picard class has a representative on $A_T$, and two representatives differ by
the pullback of a line bundle on $T$.

By the construction of $A$, at a point $x$ of $q_j$ lying over $s\in S$ one has $\OO_{A,x}\simeq\OO_{A_j,x}\times_{\OO_{S,s}}\OO_{A_{j+1},x}$, and both restriction maps are surjective. As \eqref{eq:ngon-glueing} remains exact after arbitrary base change, the same description holds for $A_T$ over $T$. By \cite[Lemma 15.6.9, Tag 0D2J]{Stacks}, a line bundle on $A_T$ is equivalent to line bundles on the components $A_j\times_ST$, together with isomorphisms at each node between their restrictions to the two copies of $q_j\times_ST$.

Represent a class in $\Pic^{[0]}_{A/S}(T)$ by a line bundle $\mathcal M$ on $A_T$. Let $\mathcal M_j\coloneqq\mathcal M|_{A_j\times_ST}$, and let $\pi_j\colon A_j\times_ST\to T$ be the projection. For every geometric point $\bar t\to T$, the restriction $\mathcal{M}_j|_{A_j\times_S \bar t}$ has degree zero on $A_j\times_S \bar t\simeq \P^1_{\bar t}$, and hence is trivial. Consequently,
\[
h^0\bigl(A_j\times_S \bar t,\mathcal M_j|_{A_j\times_S\bar t}\bigr)=1,
\quad
h^1\bigl(A_j\times_S \bar t,\mathcal M_j|_{A_j\times_S\bar t}\bigr)=0,
\]
and the higher cohomology groups vanish. The morphism $\pi_j$ is proper and of finite presentation, and $\mathcal{M}_j$ is of finite presentation and flat over $T$. By \cite[Lemma 36.32.3, Tag 0B9S]{Stacks}, the sheaf $Q_j\coloneqq(\pi_j)_*\mathcal M_j$ is invertible on an open subset containing every geometric point of $T$, hence on all of $T$. For a morphism $v\colon T'\to T$, let $v_j\coloneqq \mathrm{id}_{A_j}\times v$, and let $\pi_j'\colon A_j\times_S T'\to T'$ be the projection. The natural map $v^{\ast}Q_j\to (\pi_j')_{\ast}v_j^{\ast}\mathcal{M}_j$ is an isomorphism by \cite[Lemma 36.30.4(2), Tag 0B91]{Stacks}. The evaluation map $\pi_j^{\ast}Q_j\to \mathcal{M}_j$ restricts on every geometric fiber to the evaluation isomorphism of a trivial line bundle on a projective line. Since $\pi_j^{\ast}Q_j$ and $\mathcal{M}_j$ are invertible, the evaluation map is an isomorphism on $A_j\times_S T$.

Under the evaluation isomorphisms $\pi_j^*Q_j\simeq\mathcal M_j$, the restrictions at the two preimages of $q_j\times_ST$ are $Q_j$ and $Q_{j+1}$. The gluing map at $q_j\times_ST$ therefore induces an isomorphism $\theta_j\colon Q_j\xrightarrow{\sim}Q_{j+1}$. The composition $\theta_n\circ\cdots\circ\theta_1$ is an automorphism of $Q_1$. Since $\mathcal{E}nd_{\OO_T}(Q_1)\simeq\OO_T$, there is a unique unit $\lambda\in\Gamma(T,\OO_T^\times)$ such that $\theta_n\circ\cdots\circ\theta_1=\lambda\,\mathrm{id}_{Q_1}$. An isomorphism between two representatives conjugates the corresponding compositions and does not change the scalar $\lambda$. Tensoring $\mathcal M$ with $\pi^*L$, for a line bundle $L$ on $T$, replaces $Q_j$ and $\theta_j$ by $Q_j\otimes L$ and $\theta_j\otimes\mathrm{id}_L$, respectively, and also leaves $\lambda$ unchanged. Hence, $\lambda$ depends only on the relative Picard class of $\mathcal M$.

Conversely, for any unit $\lambda\in\Gamma(T,\OO_T^\times)$, let $\mathcal M(\lambda)$ be the line bundle obtained by gluing the trivial line bundles on $A_j\times_ST$, using the identity at $q_1\times_ST,\ldots,q_{n-1}\times_ST$ and multiplication by $\lambda$ at $q_n\times_ST$. To recover the relative class of $\mathcal M$, put $\alpha_1\coloneqq\mathrm{id}_{Q_1}$ and $\alpha_j\coloneqq\theta_{j-1}\circ\cdots\circ\theta_1\colon Q_1\xrightarrow{\sim}Q_j$ for $2\leq j\leq n$. For $j<n$, one has $\alpha_{j+1}^{-1}\circ\theta_j\circ\alpha_j=\mathrm{id}_{Q_1}$, whereas $\theta_n\circ\alpha_n=\lambda\,\mathrm{id}_{Q_1}$. Thus, after using the isomorphisms $\alpha_j$ on the components, the gluing maps are the identity at the first $n-1$ nodes and multiplication by $\lambda$ at the last node. Consequently, $\mathcal M\simeq\pi^*Q_1\otimes\mathcal M(\lambda)$. The maps $[\mathcal M]\mapsto\lambda$ and $\lambda\mapsto[\mathcal M(\lambda)]$ are therefore mutually inverse. The pullback of each $\theta_j$ is the gluing map for the pulled back line bundle, by the base change isomorphisms for $Q_j$. Hence, both maps are natural in $T$. Moreover, gluing on each component gives $\mathcal M(\lambda)\otimes\mathcal M(\lambda') \simeq\mathcal M(\lambda\lambda')$. We obtain natural group isomorphisms
\[
 c_T\colon\Pic^{[0]}_{A/S}(T)\xrightarrow{\sim}\Gamma(T,\OO_T^\times),
 \qquad [\mathcal M]\longmapsto\lambda.
\]
Since $\mathbb G_{\mathrm m,S}(T)=\Gamma(T,\OO_T^\times)$ for every $S$-scheme $T$, the functor $\Pic^{[0]}_{A/S}$ is representable by $\mathbb G_{\mathrm m,S}$. This proves (1). 

In what follows, we write $c\colon\Pic^{[0]}_{A/S}\xrightarrow{\sim}\mathbb G_{\mathrm m,S}$ for the isomorphism induced by the map $c_T$. For the gluing constants, note that $Q_j\simeq Q_1$ for every $j$. Thus, $\mathcal M\otimes\pi^*Q_1^{-1}$ is trivial on every component $A_j\times_ST$ and represents the same relative Picard class as $\mathcal M$. After replacing $\mathcal M$ by $\mathcal M\otimes\pi^*Q_1^{-1}$, choose a basis $e_j$ on each component. If the gluing map at $q_j\times_ST$ sends $e_j$ to $\lambda_je_{j+1}$, then $c_T([\mathcal M])=\lambda_1\cdots\lambda_n$.

We next prove (2). By the construction of $A$, the component $A_i$ carries a coordinate $u_0$ with $u_0(q_i)=0$ and $u_0(q_{i-1})=\infty$. Since $p_0\in U_i(S)$, the value $a\coloneqq u_0(p_0)$ lies in $\Gamma(S,\OO_S^\times)$, and the coordinate $u\coloneqq u_0/a$ on $A_i$ satisfies $u(q_i)=0$, $u(q_{i-1})=\infty$, and $u(p_0)=1$. Let $u_1$ be the pullback of $u$ from $A_i$, and let $u_2$ be the pullback of $u|_{U_i}$ from $U_i$. Let $\Delta\subset A\times_SU_i$ be the graph of $U_i\hookrightarrow A$. The graph $\Delta$ and $p_0\times_SU_i$ are sections of $A\times_SU_i\to U_i$ contained in the relative smooth locus. Since $A\times_SU_i\to U_i$ has relative dimension one, both sections are relative effective Cartier divisors. The line bundle $\mathcal L\coloneqq \OO_{A\times_SU_i}(p_0\times_SU_i-\Delta)$ has multidegree zero on every geometric fiber and defines the Abel morphism $\mathrm{ab}_{p_0}\colon U_i\to\Pic^{[0]}_{A/S}$.

On $A_i\times_SU_i$, the rational function $h\coloneqq \frac{u_1-1}{u_1-u_2}$ has divisor $p_0\times_SU_i-\Delta$. Hence, $h^{-1}$ is a nowhere vanishing section of $\mathcal L|_{A_i\times_SU_i}$. For $j\neq i$, the divisor $p_0\times_SU_i-\Delta$ is disjoint from $A_j\times_SU_i$, and the restriction of $\mathcal L$ to $A_j\times_SU_i$ has the constant section $1$ as a basis. Identifying $q_j\times_SU_i$ with $U_i$, the restrictions of $h^{-1}$ to $q_i\times_SU_i$ and $q_{i-1}\times_SU_i$ are $u$ and $1$, respectively. At $q_i\times_SU_i$, the basis on $A_i\times_SU_i$ has value $u$ and the basis on $A_{i+1}\times_SU_i$ is $1$. The gluing map from $A_i\times_SU_i$ to $A_{i+1}\times_SU_i$ therefore has coefficient $u$. Every other gluing coefficient is $1$, and the product formula for $c$ gives $c\circ\mathrm{ab}_{p_0}=u$. Since $u\colon U_i\to\mathbb G_{\mathrm m,S}$ and $c$ are isomorphisms, $\mathrm{ab}_{p_0}$ is an isomorphism of $S$-schemes. This proves (2).

Finally, assume that $S=\Spec k$ for a field $k$. Since the multidegree is locally constant on $\Pic_{A/k}$ and is zero at the identity, $\Pic^0_{A/k}\subseteq\Pic^{[0]}_{A/k}$. Conversely, $\Pic^{[0]}_{A/k}\simeq\mathbb G_{\mathrm m,k}$ by (1), which is connected and contains the identity, and hence $\Pic^{[0]}_{A/k}\subseteq\Pic^0_{A/k}$. Therefore, $\Pic^{[0]}_{A/k}=\Pic^0_{A/k}$.

It remains to compute the cohomology. Let $M$ be a line bundle of multidegree zero on $A$. Choose a trivialization of $M|_{A_j}$ on every component, and write the gluing map at $q_j$ as $e_j\mapsto\lambda_je_{j+1}$, with $\lambda_j\in k^\times$. A global section of $M$ is a tuple $(a_1,\ldots,a_n)\in k^n$ satisfying $a_{j+1}=\lambda_ja_j$ for every $j$. The first $n-1$ equations give $a_j=(\lambda_1\cdots\lambda_{j-1})a_1$ for $2\leq j\leq n$. The relation at the last node is $(1-\lambda_1\cdots\lambda_n)a_1=0$. The product $\lambda_1\cdots\lambda_n$ equals $1$ precisely when $M\simeq\OO_A$, by the construction of $c$ and the equality $\Pic(\Spec k)=0$. Therefore, $h^0(A,M)=1$ when $M\simeq\OO_A$, and $h^0(A,M)=0$ otherwise. Tensoring \eqref{eq:ngon-glueing} with $M$, and choosing bases in the fibers of $M$ at the nodes, gives
\[
 0\longrightarrow M\longrightarrow\nu_*\nu^*M
 \longrightarrow\bigoplus_{j\in\Z/n} k(q_j)\longrightarrow0.
\]
Since $\nu$ is finite and $M|_{A_j}\simeq\OO_{A_j}$ for every $j$, taking Euler characteristics yields $\chi(A,M)=\sum_{j\in\Z/n}\chi(A_j,\OO_{A_j})-n=0$. Hence, $h^1(A,M)=h^0(A,M)$, which completes the proof.
\end{proof}

\subsection{Surface contractions and semiampleness}\label{subsec:surface}
The following contraction theorem and basepoint free theorem for smooth projective surfaces are well known.

\begin{proposition}[{\protect\cite[Theorems 3.3 and 3.7]{KM},
\protect\cite[Theorems 1.3, 3.13, and 3.21]{Tanaka}}]\label{prop:surface-mmp}
Let $X$ be a smooth projective surface over an algebraically closed field.
The cone theorem holds for $X$, and every $K_X$-negative extremal ray admits
a contraction. Moreover, if $D$ is a nef Cartier divisor such that
$D-K_X$ is nef and big, then $D$ is semiample.
\end{proposition}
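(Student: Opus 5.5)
The statement is a compilation of known results, so I would prove it by reducing each assertion to the cited references and checking that their hypotheses apply in arbitrary characteristic.

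\emph{Cone theorem.} For a smooth projective surface $X$ over an algebraically closed field, Mori's cone theorem holds in every characteristic, since its proof uses only bend-and-break; this is \cite[Theorem 3.7]{KM}, and \cite[Theorem 3.13]{Tanaka} gives it for surfaces in positive characteristic. It gives
\[
\overline{\NE}(X)=\overline{\NE}(X)_{K_X\geq0}+\sum\R_{\geq0}[C_i],
\]
where each $C_i$ is a rational curve with $0<-K_X\cdot C_i\leq 3$.

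\emph{Contractions.} I would classify a $K_X$-negative extremal ray $R=\R_{\geq0}[C]$ by the sign of $C^2$, as in \cite[Theorem 1.28]{KM}:
\begin{enumerate}[(i)]
\item If $C^2<0$, adjunction and $K_X\cdot C<0$ force $C$ to be a $(-1)$-curve. Castelnuovo's contractibility criterion, which is characteristic free, then contracts $C$ to a smooth surface.
\item If $C^2=0$, Riemann--Roch shows that $C$ moves in a base point free pencil. This gives a conic bundle over a curve, and that morphism contracts $R$.
\item If $C^2>0$, the class $C$ lies in the interior of $\overline{\NE}(X)$. Extremality then forces $\rho(X)=1$, so $-K_X$ is ample and $X\simeq\P^2$, and the structure morphism to a point is the contraction.
\end{enumerate}
In each case the morphism is explicit, so no vanishing theorem is needed. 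Alternatively, \cite[Theorem 3.21]{Tanaka} gives the contraction directly in positive characteristic.

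\emph{Basepoint freeness.} In characteristic zero, for $D$ nef with $D-K_X$ nef and big, this is \cite[Theorem 3.3]{KM}, proved by Kawamata--Viehweg vanishing. In positive characteristic vanishing fails, so I would invoke \cite[Theorem 1.3]{Tanaka}, which proves the basepoint free theorem for surfaces in every characteristic.

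If I had to give a self-contained positive-characteristic argument, the case $D$ big would go as follows. Since $D$ is nef and big, the Zariski lemma and Artin contractibility show that the curves with $D\cdot C=0$ form a negative definite configuration. Because $D-K_X$ is nef and big, $-K_X$ is relatively nef along this configuration. The configuration therefore consists of curves contractible to rational singularities. Semiampleness of $D$ then reduces, by Artin's results, to the statement that $\OO(mD)$ restricted to the exceptional locus is trivial. That holds because Picard groups of rational singularity configurations are discrete and determined by degrees. The cases $D$ not big, namely $D\equiv0$ or $\nu(D)=1$, I would handle via Riemann--Roch together with $h^2(mD)=h^0(K_X-mD)=0$.

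I expect this positive-characteristic basepoint free statement to be the main obstacle, since it is the only step where the characteristic-zero proof genuinely breaks down. My plan is to rely on \cite{Tanaka} for it rather than reprove it.
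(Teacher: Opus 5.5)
Your approach is the paper's. The paper states this proposition as well known and gives no argument beyond citing \cite{KM} for characteristic zero and \cite{Tanaka} for positive characteristic, and your reduction to these references is sound.

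Three of your supporting remarks are inaccurate, though none is load-bearing, since you back each step with a citation.

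First, \cite[Theorem 3.7]{KM} is the characteristic-zero cone and contraction theorem for klt pairs, proved through Kawamata--Viehweg vanishing. The bend-and-break cone theorem valid in every characteristic, which is the one giving your bound $-K_X\cdot C_i\leq 3$, is \cite[Theorem 1.24]{KM}.

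Second, in case (ii) the curve $C$ itself need not move in a pencil when $h^1(X,\OO_X)>0$. For a fiber $C$ of $\P^1\times E\to E$ with $E$ an elliptic curve, the class of $C$ spans a $K_X$-negative extremal ray, yet $h^0(X,\OO_X(C))=1$. One must instead pass to $|nC|$ for $n\gg0$, as in the proof of \cite[Theorem 1.28]{KM}.

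Third, for $\nu(D)=1$ your fallback sketch only shows that $h^0(X,\OO_X(mD))$ grows, not that $|mD|$ is basepoint free. The appeal to \cite{Tanaka} is therefore genuinely needed there, as you anticipated.
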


\begin{remark}\label{rem:ZD}
    We use Zariski decompositions on surfaces over algebraically closed fields of arbitrary characteristic. Let $X$ be a smooth projective surface over such a field, and let $D$ be a pseudoeffective $\R$-divisor on $X$. By \cite[Theorem 3.1]{Eno24}, there is a unique decomposition $D=P+N$ such that $P$ is a nef $\R$-divisor, $N$ is zero or an effective $\R$-divisor whose prime components have negative definite intersection matrix, and $P\cdot C=0$ for every prime component $C$ of $N$.
\end{remark}

\section{Main results}\label{sec:main}
In this section, we construct the family which gives the two counterexamples and then prove Theorem \ref{thm:main}.

\subsection{A family of rational surfaces with an anticanonical cycle}\label{sec:family}

\begin{construction}\label{con:family}
Let $L_1,L_2,L_3\subset\mathbb P^2_{\mathbb Z}$ be the coordinate lines,
with indices taken modulo $3$. Blow-up the vertices $v_i=L_i\cap L_{i+1}$ and denote the exceptional curves by $E_i$. Then blow-up the point $E_i\cap L_{i+1}$, and denote the exceptional curve by $F_i$. Let $S$ be the resulting smooth toric surface, and denote the strict transforms of $L_i$ and $E_i$ on $S$ by $L_i'$ and $E_i'$, respectively.

The toric boundary of $S$ is the cycle
\[
 D_S=L_1'+E_1'+F_1+L_2'+E_2'+F_2+L_3'+E_3'+F_3,
\]
where the components occur in the cyclic order. Their
self-intersection numbers are $(L_i')^2=(E_i')^2=-2$ and $F_i^2=-1$.
The cycle $D_S$ is isomorphic to the standard $9$-gon over $\Z$ of Definition \ref{def:ngon}; fix such an identification, matching the cyclic order above.
For each $i$, set $F_i^\circ\coloneqq F_i\setminus(E_i'\cup L_{i+1}')$. Then $F_i^\circ\simeq\mathbb G_{\mathrm m,\Z}$.

Choose the centers of the last blow-up as follows. The divisor $D_S$ has degree one on each $F_i$ and degree zero on each $L_i'$ and $E_i'$. Consequently, the line bundle
\[
 \OO_{D_S}\bigl(D_S-r_1-r_2-r_3\bigr)
\]
has multidegree zero for any sections $r_i\in F_i^\circ(\Z)$. Choose $r_2\in F_2^\circ(\Z)$, $r_3\in F_3^\circ(\Z)$, and a base point $r_1^0\in F_1^\circ(\Z)$. By Lemma \ref{lem:cycle-cohomology}, the Abel map with base point $r_1^0$ is an isomorphism. Hence, there is a unique $r_1\in F_1^\circ(\Z)$ such that
\[
\OO_{D_S}(D_S-r_1-r_2-r_3)\simeq\OO_{D_S},
\]
where we use that $\Pic(\Spec\Z)=0$. For each $i$, let $z_i\colon F_i^\circ\xrightarrow{\sim}\mathbb G_{\mathrm m,\Z}$ be the coordinate characterized by
\[
 [\OO_{D_S}(r_i-p)]=z_i(p)
 \quad\text{in}\quad
 \Pic^{[0]}_{D_S/\Z}\simeq\mathbb G_{\mathrm m,\Z}.
\]
Put $B\coloneqq\Spec\Z[q,q^{-1}]$. In $S\times_{\Z}B$, let $p_1,p_2,p_3$ be the sections defined by
\[
 z_1(p_1)=q,
 \qquad
 p_2=r_2,
 \qquad
 p_3=r_3.
\]
The sections $p_1,p_2,p_3$ lie on distinct boundary components and are therefore disjoint. Blowing up their union gives a smooth projective morphism $\Pi\colon\mathfrak X\longrightarrow B$ with exceptional divisors $G_1,G_2,G_3$.

Let $\mathfrak A$ be the strict transform of $D_S\times_{\Z}B$. Since
$D_S=-K_{S/\Z}$ and each center is a smooth point of $D_S$, the canonical
divisor formula gives $\mathfrak A=-K_{\mathfrak X/B}$.
Let $k$ be an algebraically closed field, and let $q\in k^{\times}$. Write $X_q$ for the corresponding geometric fiber, and denote the strict transforms on $X_q$ by 
$\widetilde L_i,\widetilde E_i,\widetilde F_i$. The divisor
\[
 A_q=\sum_{i=1}^3
 (\widetilde L_i+\widetilde E_i+\widetilde F_i)
\]
is a reduced cycle of nine smooth rational curves. Every component $C$ of $A_q$ satisfies $C^2=-2$. The only nonzero intersections between distinct components are
\[
 \widetilde L_i\cdot\widetilde E_i
 =\widetilde E_i\cdot\widetilde F_i
 =\widetilde F_i\cdot\widetilde L_{i+1}=1.
\]
Thus, $A_q$ is a cycle of type $I_9$. In particular, $A_q^2=0$ and
$A_q\cdot C=0$ for every component $C$ of $A_q$. If $C$ is any other
integral curve, then $A_q\cdot C\geq0$ since $A_q$ is effective. Hence,
$A_q$ is nef.

Let $p_i(q)\in F_{i,k}^{\circ}(k)$ denote the value of the section $p_i$ at $q$, and let $\pi_q\colon X_q\to S_k$ be the blow-up. The restriction of $\pi_q$ induces an isomorphism
$A_q\xrightarrow{\sim}D_{S,k}$. Under this isomorphism, the formula for the
strict transform gives
\[
 \OO_{A_q}(A_q)
 \simeq
 \OO_{D_{S,k}}\bigl(D_{S,k}-p_1(q)-p_2(q)-p_3(q)\bigr).
\]
Indeed,
$\OO_{X_q}(A_q)=\pi_q^*\OO_{S_k}(D_{S,k})\otimes
\OO_{X_q}(-G_{1,q}-G_{2,q}-G_{3,q})$, and restricting to $A_q$ gives the isomorphism. By the choice of the reference points $r_i$ and the
coordinates $z_i$,
\[
 \begin{aligned}
 [\OO_{A_q}(A_q)]
 &=[\OO_{D_{S,k}}(D_{S,k}-r_1-r_2-r_3)]
   \prod_{i=1}^3[\OO_{D_{S,k}}(r_i-p_i(q))]\\
 &=z_1(p_1(q))z_2(p_2(q))z_3(p_3(q))=q
 \end{aligned}
\]
in $\Pic^{[0]}_{A_q/k}\simeq\mathbb G_{\mathrm m,k}$.
\end{construction}

Thus, $q=[\OO_{A_q}(A_q)]$ in $\Pic^{[0]}_{A_q/k}=\Pic^0_{A_q/k}\simeq\mathbb G_{\mathrm m,k}$. The next proposition gives a description of the negative curves, the Mori cone, and the semiampleness of $A_q$.

\begin{proposition}\label{prop:fiberwise}
Let $k$ be an algebraically closed field, and let
$q\in k^\times$.  For the fiber $(X_q,A_q)$ of
Construction \ref{con:family}, the following statements hold.
\begin{enumerate}[(1)]
 \item The negative curves are the nine components of $A_q$ and the three
 exceptional curves $G_{1,q},G_{2,q},G_{3,q}$.
 \item These twelve curves generate $\overline{\NE}(X_q)$.  In particular,
 the Mori and nef cones are rational polyhedral and independent of $q$ after identifying the N\'{e}ron--Severi spaces by the fixed basis of total transforms.
 \item The divisor $A_q$ is semiample if and only if $q$ has finite
 multiplicative order.
 \item If $q$ has exact order $m$, then $m$ is the least positive integer
 $n$ for which $\dim|nA_q|>0$. Moreover, the pencil $|mA_q|$ has no base points and has connected fibers of arithmetic
 genus one.
 \item The surface $X_q$ is a Mori dream space if and only if $q$ has finite
 multiplicative order.
\end{enumerate}
\end{proposition}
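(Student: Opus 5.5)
The plan is to do everything in terms of the anticanonical cycle $A_q=-K_{X_q}$, which satisfies $A_q^2=0$. The surface $X_q$ is a blow-up of $\P^2$ at $12$ points, so $\rho(X_q)=13$. By adjunction, an integral curve $C$ with $C^2<0$ satisfies $C^2+K\cdot C=2p_a(C)-2\ge -2$.

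\textbf{Step 1: parts (1) and (2).} Let $C$ be a negative curve that is not a component of $A_q$. Then $-K\cdot C=A_q\cdot C\ge0$, and adjunction forces $C^2=-1$, $K\cdot C=-1$, so $C$ is a smooth rational $(-1)$-curve. Such a curve meets $A_q$ with total intersection $1$, hence transversally at one point of a single component. I would then classify these $(-1)$-curves by their classes in the fixed basis of total transforms $\pi_q^*$ of $S$ plus $G_{1},G_2,G_3$. Write $C$ as the strict transform of a curve $C'\subset S$ with $C=\pi_q^*C'-\sum m_iG_i$. The conditions $C\cdot\widetilde F_i\in\{0,1\}$, $C\cdot\widetilde L_i=C\cdot\widetilde E_i=0$ and $C^2=-1$ restrict $C'$ to finitely many classes on the toric surface $S$. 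For each candidate $C'$ other than the $G_i$, I would show that the required incidences force the points $p_i(q)$ to satisfy a linear equivalence on $D_S$ that contradicts the construction. In practice the constraint should be that $C'|_{D_S}$ is determined by the multidegree, so its existence would pin down $\OO_{D_S}(\ldots)$.

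A cleaner route I would try first is a Hodge index and root lattice argument. The components of $A_q$ span an $\tilde A_8$ configuration in $K^\perp$, and $K^\perp$ should be an affine lattice of rank $12$. A $(-1)$-curve $C$ not equal to some $G_i$ gives, after subtracting some $G_i$, a class in $K^\perp$ of square $-2$. Its restriction to $A_q$ would be a multidegree zero line bundle whose class must equal $1$ in $\Pic^0(A_q)$, while the class of $A_q$ is $q$. Comparing the two should show that the only possibilities are the $G_i$; this needs checking.

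Given the list of twelve curves, every effective divisor is a nonnegative combination of negative curves plus a nef part, by Zariski decomposition (Remark \ref{rem:ZD}). The nef part $P$ satisfies $P^2\ge0$, and Riemann--Roch makes it effective modulo $A_q$-type classes. Together with the cone theorem for $K$-negative rays and the nef class $A_q$, this shows that $\overline{\NE}$ is generated by the twelve curves. These classes are independent of $q$.

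\textbf{Step 2: parts (3) and (4).} Use the sequence
\[0\to\OO((n-1)A_q)\to\OO(nA_q)\to\OO_{A_q}(nA_q)\to0\]
together with Lemma \ref{lem:cycle-cohomology}(3). If $q^n\ne1$, then $h^0(\OO_{A_q}(nA_q))=0$, so the dimension does not grow. Inductively $h^0(nA_q)=1$ for $n<m$; for infinite order this holds for all $n$, so $A_q$ is not semiample, since it is nef, nonzero and not numerically trivial. When $q^m=1$, I would use $h^1(\OO_{X_q})=0$ and the vanishing of $h^1$ for smaller multiples to get $h^0(mA_q)=2$, since the section restricting to a unit on $A_q$ lifts. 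Two members of the pencil $|mA_q|$ are then $mA_q$ and a disjoint divisor, so the pencil is base point free, giving an elliptic fibration. Connectedness comes from the minimality of $m$ via Stein factorization, and arithmetic genus one follows from $K\cdot mA_q=0$ and $(mA_q)^2=0$.

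\textbf{Step 3: part (5).} If $q$ has finite order, the nef cone from (2) has its generators on the boundary. Each generator $D$ is either $K$-positive-ish, with $D-K=D+A_q$ big, hence semiample by Proposition \ref{prop:surface-mmp}, or a multiple of $A_q$, which is semiample by (3). Since $h^1(\OO)=0$, condition (2) of Definition \ref{def:mdm} holds, and on a surface the movable cone equals the nef cone. If $q$ has infinite order, the nef divisor $A_q$ is not semiample, contradicting Remark \ref{rem:nef-semiample}.

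\textbf{Main obstacle.} I expect the hardest step to be the exhaustive classification of $(-1)$-curves in (1), namely ruling out extra $(-1)$-curves for every $q$ uniformly. For this I would rely on the normal bundle computation $[\OO_{A_q}(A_q)]=q$ and the specific choice of $r_1$.
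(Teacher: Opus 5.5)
There is a genuine gap in part (1), and it carries over to (2). First, $X_q$ is obtained from $\P^2$ by nine blow-ups (the $E_i$, $F_i$, $G_i$), so $\rho(X_q)=10$, not $13$. This is also forced by $K_{X_q}^2=A_q^2=0$, which you use yourself.

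Second, your adjunction step is incorrect. For a negative curve $C$ not contained in $A_q$, the relation $C^2-A_q\cdot C=2p_a(C)-2\ge-2$ with $A_q\cdot C\ge0$ permits $(C^2,A_q\cdot C)=(-2,0)$ as well as $(-1,1)$. You never exclude $(-2)$-curves disjoint from $A_q$.

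Third, the $(-1)$-curve classification is never carried out. Your constraints $C\cdot\widetilde L_i=C\cdot\widetilde E_i=0$ presuppose that the curve meets some $\widetilde F_i$. The ``cleaner route'' also fails as stated. A class in $K^\perp$ restricts to $A_q$ with total degree zero but generally not multidegree zero, so it gives no point of $\Pic^0(A_q)$ to compare with $q$.

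The missing idea is elementary linear algebra. Since $\rho(X_q)=10$, the nine components of $A_q$ are linearly independent. The $I_9$ matrix forces any relation to be $a\,A_q$, and $A_q\cdot G_{1,q}=1$ gives $a=0$. So the components span $A_q^\perp$, and by nondegeneracy a class is determined modulo $\R A_q$ by its intersection numbers with the nine components.

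This settles both cases.
\begin{itemize}
\item A $(-2)$-curve $C\not\subset A_q$ has nonnegative intersections $C\cdot A_{j,q}$ summing to $0$. So $C$ is orthogonal to every component, hence lies in $\R A_q$, contradicting $C^2<0$.
\item A $(-1)$-curve meets exactly one component, once. Solving the nine equations in the basis $H,E_i,F_i,G_i$: if the component is $\widetilde F_1$, you get $D=G_1+tA_q$ with $D^2=-1+2t$, so $t=0$. If it is $\widetilde E_1$ or $\widetilde L_1$, you get $(g_1,g_2,g_3)=(t-\frac13,t+\frac13,t)$ or $(t+\frac13,t+\frac23,t)$, never integral.
\end{itemize}
No information about the points $p_i(q)$, the class $[\OO_{A_q}(A_q)]=q$, or $r_1$ is used. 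This is exactly why the cones are independent of $q$, and the obstacle you anticipated is not where the difficulty lies.

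For (2), the Riemann--Roch sentence does not give the conclusion; use the cone theorem instead. Since $-K_{X_q}=A_q$ is nef, the $K$-nonnegative part of the cone is $\overline{\NE}(X_q)\cap A_q^\perp$. Zariski decomposition and Hodge index show this part is spanned by the nine components; the negative part cannot contain $G_{i,q}$ because $A_q\cdot G_{i,q}=1$. Every $K$-negative extremal ray has a birational contraction because $\rho=10$, so it is spanned by a $(-1)$-curve, that is, by some $G_{i,q}$.

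Your treatment of (3)--(5) is fine and agrees with the paper's argument.
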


\begin{proof}
By the construction, we have $\rho(X_q)=10$ and the classes of total transforms $H,E_i,F_i,G_i$, for $1\leq i\leq 3$, form a basis of $\Pic(X_q)$. Denote the components of $A_q$ in cyclic order by $A_{1,q},\dots,A_{9,q}$. The intersection matrix of the $I_9$-cycle has rank eight. Suppose that $\sum_{j=1}^9 a_jA_{j,q}=0$ is a linear relation among the components of $A_q$. Intersecting successively with the components of the cycle shows that $a_1=\cdots=a_9$. Hence,
\[
\sum_{j=1}^9 a_jA_{j,q}=a_1A_q.
\]
Intersecting this equality with $G_{1,q}$ gives $0=a_1(A_q\cdot G_{1,q})=a_1$, since $A_q\cdot G_{1,q}=1$. Therefore, all $a_j$ vanish. Hence, the classes of the nine components of $A_q$ are linearly independent and span $A_q^\perp\subset N^1(X_q)_{\R}$.

For every integral curve $C$, adjunction gives $C^2=2p_a(C)-2+A_q\cdot C$. Since $A_q$ is nef, a negative curve is either a $(-2)$-curve orthogonal to $A_q$ or a $(-1)$-curve meeting $A_q$ once. Suppose that $C$ is a $(-2)$-curve which is not a component of $A_q$. Since $C$ is distinct from every component $A_{j,q}$ of $A_q$, one has $C\cdot A_{j,q}\geq 0$ for every $j$. On the other hand,
\[
0=A_q\cdot C=\sum_{j=1}^9 A_{j,q}\cdot C.
\]
Hence, $A_{j,q}\cdot C=0$ for every $j$.
Therefore, $C$ is orthogonal to every component of $A_q$. The numerical
class of $C$ is then proportional to $A_q$, contradicting $C^2<0$.

To determine the $(-1)$-curves, write divisor classes in the basis of total transforms $H,E_i,F_i,G_i$. We use cyclic indices, with $E_0=E_3$ and $F_0=F_3$. Then the classes of the components of $A_q$ are
\[
\widetilde L_i=H-E_{i-1}-E_i-F_{i-1},\qquad \widetilde E_i=E_i-F_i,\qquad \widetilde F_i=F_i-G_i, \qquad (1\leq i\leq 3).
\]
A $(-1)$-curve not contained in $A_q$ meets exactly one component of
$A_q$, with intersection number one. Write its numerical class as
\[
 D=dH-\sum_{i=1}^3e_iE_i-\sum_{i=1}^3f_iF_i
   -\sum_{i=1}^3g_iG_i.
\]
Cyclic symmetry reduces the calculation to the three types of components of $A_q$. Solving the nine intersection equations in these three cases
gives
\[
\begin{array}{c|c|c}
\text{component met} & \text{solution} & \text{consequence}\\
\hline\rule{0pt}{2.7ex}
\widetilde F_1 & D=G_1+tA_q,\quad D^2=-1+2t & t=0\\
\widetilde E_1 & (g_1,g_2,g_3)=(t-\frac13,t+\frac13,t)
  & \text{no integral class}\\
\widetilde L_1 & (g_1,g_2,g_3)=(t+\frac13,t+\frac23,t)
  & \text{no integral class}.
\end{array}
\]
By cyclic symmetry again, every $(-1)$-curve is numerically equivalent to one of
$G_{1,q},G_{2,q},G_{3,q}$. An irreducible curve distinct from $G_{i,q}$
cannot have the same numerical class, since its intersection with
$G_{i,q}$ would then be $-1$. Hence, the only $(-1)$-curves are
$G_{1,q},G_{2,q},G_{3,q}$. Since the computation depends only on the integral intersection matrix in the basis of total transforms, it remains valid after arbitrary base change from $\Z$. In characteristic zero, the resulting
configuration is the $H\widetilde A_8$ configuration of
\cite[Example 1.4.1]{Nikulin}. This proves (1).

Since $X_q$ is a smooth surface, we identify the numerical divisor and curve spaces via the intersection pairing. Let $D$ be a pseudoeffective $\R$-divisor on $X_q$ satisfying $A_q\cdot D=0$, and write its Zariski decomposition as $D=P+N$, whose existence in arbitrary characteristic was recalled in Remark \ref{rem:ZD}. The nefness of $A_q$ and $P$, together with the effectivity of $N$, gives $A_q\cdot P=A_q\cdot N=0$. Let $C$ be a prime component of $N$. The negative definiteness of the intersection matrix of the components of $N$ gives $C^2<0$. Since $A_q$ is nef and $A_q\cdot N=0$, we have $A_q\cdot C=0$. By (1), $C$ is either a component of $A_q$ or one of the curves $G_{1,q},G_{2,q},G_{3,q}$. The equality $A_q\cdot G_{i,q}=1$ for every $i$ excludes these three curves. Hence, every prime component of $N$ is a component of $A_q$. Since $P$ is nef, we have $P^2\geq 0$. The Hodge index theorem and the equalities $A_q^2=A_q\cdot P=0$ give $P\equiv cA_q$ for some $c\geq 0$. Since $A_q$ is the sum of its components, we obtain
\[
 \overline{\Eff}(X_q)\cap A_q^\perp
 =\mathrm{Cone}(A_{1,q},\ldots,A_{9,q}).
\]
Every extremal ray of $\overline{\NE}(X_q)$ not contained in $A_q^\perp$ is
$K_{X_q}$-negative. By Proposition \ref{prop:surface-mmp}, each such ray
admits a contraction. A contraction to a point would give
$\rho(X_q)=1$, while a contraction onto a projective curve would give
$\rho(X_q)=2$. Both conclusions contradict $\rho(X_q)=10$. Hence, every
such contraction is birational, and its extremal ray is generated by one
of the three $(-1)$-curves. This proves (2).

Set $\eta_q\coloneqq\OO_{A_q}(A_q)$. Since $X_q$ is rational,
$H^1(X_q,\OO_{X_q})=0$. By Lemma \ref{lem:cycle-cohomology}, both
$H^0(A_q,\eta_q^n)$ and $H^1(A_q,\eta_q^n)$ vanish whenever
$\eta_q^n\not\simeq\OO_{A_q}$. If $\eta_q^n\simeq\OO_{A_q}$, then both
cohomology groups are of dimension one.

For every integer $n>0$, consider the following exact sequences
\[
 0\longrightarrow\OO_{X_q}((n-1)A_q)
 \longrightarrow\OO_{X_q}(nA_q)
 \longrightarrow\eta_q^n\longrightarrow0.
\]
If $q$ has infinite order, then induction gives $h^0(X_q,nA_q)=1$ for every
$n>0$, and $A_q$ is not semiample. If $A_q$ is semiample, then there exists a positive integer $n$ such that $\mathcal O_{X_q}(nA_q)$ is globally generated. The restriction $\mathcal O_{A_q}(nA_q)=\eta_q^n$ is also globally generated. Since it has multidegree zero on the cycle $A_q$, it is trivial. Hence, $\eta_q$ is torsion, and therefore $q$ has finite multiplicative order.

Conversely, assume that $q$ has exact order $m$. For $0<n<m$, the same exact
sequences give $h^0(X_q,nA_q)=1$ and $h^1(X_q,nA_q)=0$. For $n=m$, the
vanishing of $H^1(X_q,(m-1)A_q)$ implies that the map $H^0(X_q,mA_q)\to H^0(A_q,\eta_q^m)=k$ is surjective, and therefore $h^0(X_q,mA_q)=2$. Choose $s_1\in H^0(X_q,mA_q)$ whose restriction to $A_q$ is a nonzero constant, and let $s_0$ be the section with divisor $mA_q$. The divisor of zeroes of $s_1$ is disjoint from $A_q$, whereas $s_0$ is nonzero away from $A_q$. Thus, $s_0$ and $s_1$ have no common zero, and the pencil they generate is basepoint free. Write the Stein factorization of the morphism defined by the pencil as
\[
X_q\xrightarrow{g}C\xrightarrow{h}\mathbb P^1.
\]
Since $X_q$ is rational and $g_*\OO_{X_q}=\OO_C$, one has $H^1(C,\OO_C)=0$. Thus, $C\simeq\mathbb P^1$. If $d=\deg h$, then
$h^*\OO_{\mathbb P^1}(1)\simeq\OO_{\mathbb P^1}(d)$, and
\[
 h^0(X_q,mA_q)=h^0(C,h^*\OO_{\mathbb P^1}(1))=d+1.
\]
Since $h^0(X_q,mA_q)=2$, we have $d=1$. Hence, the morphism defined by
the pencil has connected fibers. Adjunction gives arithmetic genus one for
every fiber. This proves
(3) and (4).

It remains to prove (5). Let $D$ be a nef Cartier divisor. If
$D\cdot A_q>0$, then $D-K_{X_q}=D+A_q$ is nef and big, since
$(D+A_q)^2\geq2D\cdot A_q>0$. Proposition \ref{prop:surface-mmp}
therefore implies that $D$ is semiample. If $D\cdot A_q=0$, then the Hodge
index theorem gives $D\equiv cA_q$ for some $c\geq0$. Since
$A_q\cdot G_{1,q}=1$, one has
$c=D\cdot G_{1,q}\in\mathbb Z_{\geq0}$. The basis of total transforms
identifies $\Pic(X_q)$ with a free abelian group of rank ten, and the
intersection pairing is nondegenerate. Consequently, numerical and linear
equivalence coincide on $X_q$, and $D\sim cA_q$. Hence, every nef Cartier
divisor is semiample if and only if $A_q$ is semiample.

The cone $\Nef(X_q)$ is rational polyhedral by (2), and
$\Pic(X_q)_{\Q}=N^1(X_q)_{\Q}$. Every movable divisor on a smooth
projective surface is nef. If $A_q$ is semiample, then every nef Cartier divisor is semiample, and hence some positive multiple is movable. Therefore, $\Nef(X_q)\subseteq\Mov(X_q)$. Since the reverse inclusion holds on every smooth projective surface, we have $\Mov(X_q)=\Nef(X_q)$, and $X_q$ is a Mori dream space. 

Conversely, if $X_q$ is a Mori dream space, then $A_q$ is semiample by Remark \ref{rem:nef-semiample}, since $A_q$ is nef. Together with (3), this proves (5).
\end{proof}

\subsection{The counterexamples}\label{sec:counterexamples}
We now apply Proposition \ref{prop:fiberwise} in characteristic zero and
in positive characteristic.

\subsubsection{Characteristic zero}

\begin{proposition}\label{prop:complex-counterexample}
After base change of Construction \ref{con:family} to $\mathbb C$, we have
\[
 \{q\in\mathbb C^\times\mid X_q\text{ is a Mori dream space}\}
 =\mu_\infty.
\]
Both $\mu_\infty$ and $\mathbb C^\times\setminus\mu_\infty$ are Zariski
dense, and $\mu_\infty$ is not constructible. For every nonempty open
subset $U\subset\mathbb G_{\mathrm m,\mathbb C}$, the morphism
$\mathfrak X_U\to U$ is not a Mori dream morphism. After any dominant
generically finite base change from a normal integral curve, the resulting
family is not a Mori dream morphism over any nonempty open subset of the
new base.
\end{proposition}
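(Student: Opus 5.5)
The equality of sets is immediate from Proposition \ref{prop:fiberwise}(5): for $q\in\C^\times$, the fiber $X_q$ is a Mori dream space exactly when $q$ has finite multiplicative order, that is, when $q\in\mu_\infty$. For density, I would note that $\mu_\infty$ is infinite. Since every proper closed subset of the irreducible curve $\mathbb G_{\mathrm m,\C}$ is finite, $\mu_\infty$ is Zariski dense. The complement $\C^\times\setminus\mu_\infty$ is uncountable, so it is also dense. A constructible subset of a curve is finite or cofinite. Here $\mu_\infty$ is infinite, and its complement is infinite as well, so $\mu_\infty$ is not constructible.

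For the statement about open subsets, I will argue by contradiction. Suppose $\mathfrak X_U\to U$ is a Mori dream morphism for some nonempty open $U$. The plan is to show that the relative divisor $\mathfrak A|_U=-K_{\mathfrak X_U/U}$ is relatively nef and hence relatively semiample. Relative nefness should follow from fiberwise nefness: every curve contracted by the projection lies in some fiber $X_q$, and $A_q$ is nef there by Construction \ref{con:family}. Remark \ref{rem:nef-semiample}, which uses conditions (2) and (3) of Definition \ref{def:mdm}, then gives that $\mathfrak A|_U$ is relatively semiample. So some multiple $n\mathfrak A$ is relatively globally generated. Restricting to a fiber shows that $nA_q$ is globally generated on $X_q$ for every closed point $q\in U$. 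By Proposition \ref{prop:fiberwise}(3), every such $q$ would then be a root of unity. This is impossible, because $U$ is cofinite and $\mu_\infty$ is countable.

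Everything here uses only closed fibers, which keeps the argument simple. The one point to check is that relative global generation restricts to fibers. It does: the surjection $\Pi^*\Pi_*\OO(n\mathfrak A)\to\OO(n\mathfrak A)$ restricts over $q$, and the image of the sections pushed forward lies in $H^0(X_q,nA_q)$. So no base change theorem for $\Pi_*$ is needed.

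For the last claim, take a dominant generically finite morphism $\beta\colon T'\to\mathbb G_{\mathrm m,\C}$ from a normal integral curve, and set $\mathfrak X'=\mathfrak X\times_{B}T'$. I would rerun the argument above on $\mathfrak X'$. The pulled-back divisor $\mathfrak A'$ is still relatively nef, since the fiber over $t\in T'$ is $X_{\beta(t)}$. If $\mathfrak X'_{U'}\to U'$ were a Mori dream morphism for some nonempty open $U'$, then $\mathfrak A'$ would again be relatively semiample, and $A_{\beta(t)}$ would be semiample for every $t\in U'$. But $\beta(U')$ is a nonempty constructible dense subset of $\mathbb G_{\mathrm m,\C}$, hence cofinite and uncountable, so it contains points outside $\mu_\infty$. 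This is the same contradiction as before.

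The main obstacle I anticipate is a technicality: Definition \ref{def:mdm} is stated for fibrations between normal quasi-projective varieties, so $\mathfrak X'\to T'$ must satisfy those hypotheses. The base change of a smooth morphism is smooth, so $\mathfrak X'$ is smooth over the normal curve $T'$ and hence normal. We also need $f_*\OO=\OO$, which follows from flat base change together with $\Pi_*\OO_{\mathfrak X}=\OO_B$; the latter holds because the fibers are geometrically integral rational surfaces with $h^0(\OO)=1$.
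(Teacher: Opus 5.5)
Your proof is correct. Its skeleton matches the paper's: relative nefness of $\mathfrak A$ (or of its pullback), together with Remark \ref{rem:nef-semiample}, forces relative semiampleness, and Proposition \ref{prop:fiberwise}(3) contradicts this. The two arguments differ only in where the contradiction is detected.

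You restrict the relatively globally generated multiple $n\mathfrak A$ to closed fibers. You then use that the image of any nonempty open subset of the base is cofinite in $\C^\times$, so it contains points that are not roots of unity. The paper instead notes that $\tau^*q\in\C(C)^\times$ is nonconstant, hence of infinite order. It then applies Proposition \ref{prop:fiberwise} over $\overline{\C(C)}$, so semiampleness already fails on the geometric generic fiber.

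Your route is slightly more elementary. It never leaves closed points, and it needs no statement about restricting global generation to the (geometric) generic fiber. The paper's route is uniform in the base field, and it is reused essentially verbatim in Proposition \ref{prop:finite-field-counterexample}. There your closed-fiber argument would break down: every element of $\overline{\mathbb F}_p^\times$ is torsion, so every closed fiber is a Mori dream surface and there is nothing to contradict.

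Organizationally, the paper also deduces the statement for $\mathfrak X_U\to U$ by taking $\tau=\mathrm{id}_U$ in the base-change argument, rather than treating it separately as you do.
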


\begin{proof}
By Proposition \ref{prop:fiberwise}, the surface $X_q$ is a Mori dream space precisely when $q$ is a root of unity, which proves the equality. Every infinite subset of the irreducible curve
$\mathbb G_{\mathrm m,\mathbb C}$ is Zariski dense. Hence, both
$\mu_\infty$ and its complement are dense. A dense constructible subset
of an irreducible curve contains a nonempty open subset. Since
$\mathbb C^\times\setminus\mu_\infty$ meets every nonempty open
subset, $\mu_\infty$ is not constructible.

Let $\tau\colon C\to U$ be a dominant generically finite morphism from a normal integral curve to a nonempty open subset $U\subset\mathbb G_{\mathrm m,\mathbb C}$, and let $C^\circ\subset C$ be a nonempty open subset. The function $\tau^{\ast}q\in \C(C)^{\times}$ is nonconstant and therefore has infinite
multiplicative order. The divisor $\tau^*\mathfrak A$ is relatively nef,
whereas Proposition \ref{prop:fiberwise} shows that its restriction to the
geometric generic fiber of $C$ is not semiample. If the family obtained by
base change to $C^\circ$ is a Mori dream morphism, then $\tau^*\mathfrak A$ is relatively semiample by Remark \ref{rem:nef-semiample}, a contradiction. 

Taking $C=C^\circ=U$ and $\tau=\mathrm{id}_U$ proves the assertion for $\mathfrak X_U\to U$. The same argument applies to an arbitrary dominant generically finite base change.
\end{proof}

Proposition \ref{prop:complex-counterexample} gives a negative answer to \cite[Question 3.2]{CLZ}. 

\subsubsection{Positive characteristic}

The family obtained by reduction modulo $p$ has the additional property that
every geometric fiber over a closed point is a Mori dream surface, whereas
its geometric generic fiber is not a Mori dream space.

\begin{proposition}\label{prop:finite-field-counterexample}
Let $p$ be a prime, and consider
\[
 \Pi_p\colon\mathfrak X_p\longrightarrow\mathbb G_{\mathrm m,\mathbb F_p}.
\]
Every geometric fiber of $\Pi_p$ over a closed point is a Mori dream
surface, whereas the geometric generic fiber is not a Mori dream space. For
every nonempty open subset $U\subset\mathbb G_{\mathrm m,\mathbb F_p}$, the
morphism $\mathfrak X_{p,U}\to U$ is not a Mori dream morphism. After any
dominant generically finite base change from a normal integral curve,
including a purely inseparable base change, the resulting family is not a
Mori dream morphism over any nonempty open subset of the new base.
\end{proposition}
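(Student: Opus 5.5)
The plan is to read off everything from Proposition \ref{prop:fiberwise}(5), applied over algebraically closed fields of characteristic $p$. The only inputs needed are which parameters $q$ have finite multiplicative order, together with the argument of Proposition \ref{prop:complex-counterexample} for the statements about open subsets and base changes.

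First, the closed fibers. Let $x\in\mathbb G_{\mathrm m,\mathbb F_p}$ be a closed point. Its residue field $\kappa(x)$ is a finite field $\mathbb F_{p^r}$. A geometric point over $x$ is given by an embedding $\kappa(x)\hookrightarrow k$ with $k$ algebraically closed, and its parameter is the image of $q$. This image lies in $\mathbb F_{p^r}^\times$, so $q^{p^r-1}=1$ and $q$ has finite order. Since the geometric fiber is $X_q$ over $k$ in the notation of Construction \ref{con:family}, Proposition \ref{prop:fiberwise}(5) shows that it is a Mori dream surface.

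Next, the geometric generic fiber. Here the parameter is the image of $q$ in $k=\overline{\mathbb F_p(q)}$. The element $q$ is transcendental over $\mathbb F_p$. If it had finite order, it would satisfy $q^n=1$ and hence be algebraic, so it has infinite order. Again by Proposition \ref{prop:fiberwise}(5), this fiber is not a Mori dream space, in the sense of Definition \ref{def:mdm} with $V=\Spec k$.

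For the statements over open subsets and after base change, I would copy the proof of Proposition \ref{prop:complex-counterexample}.

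1. Let $\tau\colon C\to U$ be dominant and generically finite, with $C$ a normal integral curve over $\mathbb F_p$ and $U\subset\mathbb G_{\mathrm m,\mathbb F_p}$ a nonempty open subset. Let $C^\circ\subset C$ be a nonempty open subset.
2. The function $\tau^*q\in\mathbb F_p(C)^\times$ is not algebraic over $\mathbb F_p$, because $\tau$ is dominant. This holds even when $\tau$ is purely inseparable: then $\tau^*q$ is a $p$-power root of $q$ or $q$ itself, and is still transcendental. Hence $\tau^*q$ has infinite order in $\overline{\mathbb F_p(C)}^\times$.
3. The geometric generic fiber of the base-changed family is $X_{\tau^*q}$ over $\overline{\mathbb F_p(C)}$. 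By Proposition \ref{prop:fiberwise}(3), the restriction of $\tau^*\mathfrak A$ to it is not semiample.
4. On the other hand, $\tau^*\mathfrak A=-K$ restricts to the nef cycle on every fiber, so it is relatively nef.
5. If the family over $C^\circ$ were a Mori dream morphism, Remark \ref{rem:nef-semiample} would make $\tau^*\mathfrak A$ relatively semiample. Restricting to the geometric generic fiber gives a semiample divisor there, which contradicts step 3.
6. Taking $C=U$ and $\tau=\mathrm{id}$ gives the statement for $\mathfrak X_{p,U}\to U$.

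The only point needing care is step 5. Relative global generation of a multiple of $\tau^*\mathfrak A$ restricts to global generation on the generic fiber, and this persists after the flat base change to its algebraic closure. Since $\tau^*\mathfrak A$ restricts to $A_{\tau^*q}$ there, it would have to be semiample. The whole argument therefore rests on the single fact that closed points have parameters in finite fields while the generic parameter is transcendental.
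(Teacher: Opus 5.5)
Your proposal is correct and follows the paper's proof essentially step for step. Finite residue fields at closed points force $q$ to be a root of unity. Transcendence of $q$, or of $\tau^*q$, gives infinite order at the generic point. Remark \ref{rem:nef-semiample} then turns a hypothetical Mori dream morphism into relative semiampleness of the nef anticanonical cycle, which contradicts Proposition \ref{prop:fiberwise}(3) on the geometric generic fiber.

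One slip is cosmetic. In the purely inseparable case, $\tau^*q$ is simply the image of $q$ under the inclusion $\mathbb F_p(q)\subset\mathbb F_p(C)$, where it is a $p^e$-th power rather than a $p$-power root. Your general remark that a field embedding preserves transcendence already covers this case.
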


\begin{proof}
Let $b$ be a closed point of $\mathbb G_{\mathrm m,\mathbb F_p}$.
Then $\kappa(b)=\mathbb F_{p^r}$ for some $r>0$, and
$q(b)\in\kappa(b)^\times$ has finite multiplicative order.
Proposition \ref{prop:fiberwise} shows that the geometric fiber
over $b$ is a Mori dream surface. On the other hand, $q$ has infinite multiplicative order in $\overline{\mathbb F_p(q)}^\times$, and the geometric generic fiber is not a Mori dream space. If the restriction over a nonempty open subset is a Mori dream morphism,
then the relatively nef divisor $\mathfrak A_p$ is relatively
semiample by Remark \ref{rem:nef-semiample}. Relative semiampleness would imply
semiampleness on the geometric generic fiber, which is a contradiction.

Let $\tau\colon C\to U$ be a dominant generically finite morphism from a normal
integral curve to a nonempty open subset
$U\subset\mathbb G_{\mathrm m,\mathbb F_p}$, and let
$C^\circ\subset C$ be a nonempty open subset. The function
$\tau^*q\in\mathbb F_p(C)^\times$ is nonconstant and therefore has infinite
multiplicative order. Proposition \ref{prop:fiberwise} shows that the
pullback of $\mathfrak A_p$ is not semiample on the geometric generic fiber
of $C$. If the family over $C^\circ$ is a Mori dream morphism, then the relatively nef
pullback of $\mathfrak A_p$ is relatively semiample by Remark \ref{rem:nef-semiample}. Relative semiampleness implies semiampleness on the geometric generic fiber, a contradiction. The same proof applies when $\tau$ is purely inseparable.
\end{proof}

\subsection{The geometric generic fiber}\label{sec:generic}

We use the following comparison between relative divisor classes and divisor classes on the generic fiber.

\begin{lemma}\label{lem:qfactor-open}
Let $k$ be a perfect field and let $g\colon Y\to V$ be a projective dominant
morphism between normal integral $k$-varieties.  Suppose that the geometric
generic fiber is integral and that $Y_\eta$ is $\Q$-factorial and $\Pic(Y_\eta)_{\Q}$ is finite dimensional.  After replacing $V$ by a nonempty
open subset, the following statements hold.
\begin{enumerate}[(1)]
 \item The variety $V$ is smooth, the morphism $g$ is flat, and every
 geometric fiber is integral.
 \item The variety $Y$ is $\Q$-factorial.
 \item Restriction induces an isomorphism
 \[
  \Pic(Y/V)_{\Q}\xrightarrow{\sim}\Pic(Y_\eta)_{\Q}.
 \]
\end{enumerate}
If $\Pic(Y_\eta)_{\Q}=N^1(Y_\eta)_{\Q}$, then
$\Pic(Y/V)_{\Q}=N^1(Y/V)_{\Q}$.
\end{lemma}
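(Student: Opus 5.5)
Part (1) is immediate from Lemma \ref{lem:open-integral}: after shrinking $V$, the base $V$ is smooth, $g$ is flat, and every geometric fiber is integral. I then shrink $V$ a few more times, each shrinking preserving (1). Since $V$ is smooth, it is locally factorial. This is the main tool for (2) and (3): the Weil divisors of $Y$ split into horizontal and vertical parts, and the vertical parts are controlled by the base. Because the fibers of $g$ are integral, for every prime divisor $P\subset V$ the preimage $g^{-1}(P)$ is an irreducible (and, by flatness, generically reduced) prime divisor of $Y$. Moreover $g^*P$ is Cartier, since $P$ is Cartier on the smooth variety $V$. A vertical prime divisor $E$ of $Y$ cannot map onto a proper closed subset of codimension $\geq 2$ in $V$, by flatness and dimension count. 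Hence $E=g^{-1}(P)$ for some prime divisor $P$ of $V$, so every vertical Weil divisor is Cartier and is a pullback.

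For (2), let $D$ be a Weil divisor on $Y$. Its restriction $D_\eta$ is $\Q$-Cartier by assumption, so $mD_\eta$ is Cartier for some $m>0$, and the corresponding invertible sheaf extends over an open subset of $V$. The obstacle is that a single shrinking does not work for all $D$ at once. I would fix a finite set of Weil divisors $D_1,\dots,D_r$ whose restrictions generate $\mathrm{Cl}(Y_\eta)_{\Q}$. This is possible because $\mathrm{Cl}(Y_\eta)_{\Q}=\Pic(Y_\eta)_{\Q}$ by $\Q$-factoriality, and the latter is finite dimensional. I then shrink $V$ so that each $m_iD_i$ is Cartier on $Y$. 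To do this I take the closure of a Cartier representative of $m_iD_{i,\eta}$: the difference from $m_iD_i$ is vertical, so it is Cartier by the first paragraph. Equivalently, the locus where $m_iD_i$ fails to be Cartier is closed, misses the generic fiber, and can be removed. Now take an arbitrary Weil divisor $D$. Some multiple satisfies $nD_\eta\sim\sum a_iD_{i,\eta}$ with $a_i\in\Z$, so $nD-\sum a_iD_i-\mathrm{div}(h)$ is vertical and hence Cartier. Therefore $D$ is $\Q$-Cartier, and $Y$ is $\Q$-factorial.

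For (3), surjectivity follows the same pattern. Choose a basis of $\Pic(Y_\eta)_{\Q}$ by line bundles, extend them by Lemma \ref{lem:models-open}, and shrink $V$. For injectivity, let $L$ be a line bundle on $Y$ with $L_\eta\simeq \OO_{Y_\eta}$. A trivializing rational section gives $L\simeq\OO_Y(E)$ with $E$ vertical, and then $E=g^*P$ for a divisor $P$ on $V$ by the first paragraph. So $L\in g^*\Pic(V)$, and injectivity holds even integrally. Finally, if $\Pic(Y_\eta)_{\Q}=N^1(Y_\eta)_{\Q}$, let $L$ be $g$-numerically trivial. I expect $L_\eta$ to be numerically trivial: numerical triviality on the geometric generic fiber can be tested on curves, which spread out to curves in closed fibers, and by flatness the degrees are constant. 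Hence $L_\eta$ is torsion in $\Pic(Y_\eta)$ after tensoring with $\Q$, and by (3) $L$ is zero in $\Pic(Y/V)_{\Q}$. The main delicate step is this numerical comparison between $Y_\eta$ and the closed fibers. I would handle it by spreading out each relevant curve over an open subset of $V$ and using constancy of degree in flat families.
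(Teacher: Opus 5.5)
Your proposal is correct and follows essentially the same route as the paper. Vertical prime divisors are identified with $g^*P$ for prime (hence Cartier) divisors $P$ on the smooth base. $\Q$-factoriality and the isomorphism in (3) then follow by comparing with finitely many extended generators of $\Pic(Y_\eta)_{\Q}$ modulo vertical divisors. The numerical statement comes from spreading out each curve and using constancy of the Euler characteristic in a flat family.

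The only cosmetic difference is in (2). You make finitely many Weil divisors $m_iD_i$ Cartier, and your second justification is the clean one: remove the image of the closed non-Cartier locus, which misses $Y_\eta$. The paper instead works directly with Cartier divisors $H_i$ of extended line bundles $L_i$.
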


\begin{proof}
By Lemma \ref{lem:open-integral}, after shrinking $V$, we may assume that $V$ is smooth, $g$ is flat, and every geometric fiber is integral.

Choose line bundles $L_{1,\eta},\ldots,L_{\rho,\eta}$ whose classes form a basis of $\Pic(Y_\eta)_{\Q}$. By Lemma \ref{lem:models-open}, after a further shrinking, each $L_{i,\eta}$ extends to a line bundle $L_i$ on $Y$.

Let $E\subset Y$ be a vertical prime divisor, and set $B\coloneqq \overline{g(E)}\subset V$. Let $r$ be the relative dimension of $g$. Since $g$ is flat, every fiber
has pure dimension $r$. Since
\[
\dim E=\dim Y-1=\dim V+r-1
\]
and the generic fiber of $E\to B$ has dimension at most $r$, we have
\[
\dim B\geq \dim V-1.
\]
Since $E$ is vertical, $B$ is a proper closed subset of $V$. Hence $B$ is
a prime divisor on $V$.

Since $V$ is smooth, the prime divisor $B$ is Cartier. By the choice of the open subset, the fiber over the generic point of $B$ is integral. Thus, $E$ is the unique prime divisor of $Y$ dominating $B$ and occurs with multiplicity one in $g^*B$. Therefore, $g^*B=E$ as Weil divisors, and hence $E$ is Cartier.

Let $D\subset Y$ be a horizontal prime divisor. For each $i$, choose a
Cartier divisor $H_i$ on $Y$ such that $\OO_Y(H_i)\simeq L_i$.
Since $Y_\eta$ is $\Q$-factorial and the classes of
$L_{1,\eta},\ldots,L_{\rho,\eta}$ form a basis of
$\Pic(Y_\eta)_{\Q}$, there exist a positive integer $m$, integers $a_i$,
and a rational function $\varphi\in K(Y_\eta)^\times=K(Y)^\times$ such that
\[
mD_\eta-\sum_{i=1}^{\rho}a_iH_{i,\eta}
=
\mathrm{div}_{Y_\eta}(\varphi).
\]
Since a horizontal prime divisor of $Y$ and its generic fiber define the same valuation on $K(Y)=K(Y_{\eta})$, it follows that
\[
mD-\sum_{i=1}^{\rho}a_iH_i-\mathrm{div}_Y(\varphi)
\]
is a vertical Weil divisor. Every vertical prime divisor on $Y$ is Cartier by the above argument. Thus, the vertical divisor above is Cartier. Hence, $mD$ is Cartier, and therefore $D$ is $\Q$-Cartier. This proves (2).

Since the classes of $L_{1,\eta},\ldots,L_{\rho,\eta}$ form a basis of
$\Pic(Y_\eta)_{\Q}$ and each $L_{i,\eta}$ extends to $L_i$ on $Y$, the
restriction map $\Pic(Y/V)_{\Q}\to\Pic(Y_\eta)_{\Q}$ is surjective. For injectivity, let $L$ be a line bundle on $Y$ whose restriction to $Y_\eta$ is $\Q$-linearly trivial. A rational trivialization of a positive power has vertical divisor. Every vertical divisor is pulled back from $V$. Consequently, the line bundle is trivial in $\Pic(Y/V)_{\Q}$.

Finally, let $M$ be a line bundle on $Y$ that is numerically trivial
over $V$. For an integral curve $C_\eta\subset Y_\eta$, let $\mathcal C\subset Y$
be its closure. By generic flatness, the projective morphism
$\mathcal C\to V$ is flat with one-dimensional fibers over a nonempty open subset of $V$. The difference of Euler characteristics of $M|_{\mathcal C}$ and $\OO_{\mathcal C}$ is constant in this flat family, and hence $\deg(M|_{\mathcal C_v})$ is independent of $v$. Relative numerical triviality gives $\deg(M|_{\mathcal C_v})=0$ for a general closed point $v\in V$, and therefore $\deg(M_\eta|_{C_\eta})=0$. Thus, $M_\eta$ is
numerically trivial.

Under the last assumption, $M_\eta$ is $\Q$-linearly trivial, and (3) implies that $M$ is relatively $\Q$-linearly trivial. Thus, the natural map $\Pic(Y/V)_{\Q}\to N^1(Y/V)_{\Q}$ is injective and hence an isomorphism.
\end{proof}

The next proposition passes from the geometric generic fiber to a relative Mori dream morphism.

\begin{proposition}\label{prop:generic-criterion}
Let $k$ be an uncountable algebraically closed field of arbitrary
characteristic, and let
$f\colon X\to T$ be a projective fibration between normal integral
$k$-varieties.  If the geometric generic fiber $X_{\overline\eta}$ is a
Mori dream space, then there is a nonempty open subset $U\subset T$ such
that $X_U\to U$ is a Mori dream morphism.
\end{proposition}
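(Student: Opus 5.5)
The plan is to descend the Mori dream structure from $X_{\overline\eta}$ to a finite extension of $K=k(T)$, spread it out over an open subset of a finite cover of $T$, and then push it down to $T$.

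\emph{Step 1: descent to a finite extension.} Since $X_{\overline\eta}$ is a Mori dream space, it has finitely many small $\Q$-factorial modifications $X_{\overline\eta}\dashrightarrow X_{i,\overline\eta}$ covering $\Mov(X_{\overline\eta})$. Each nef cone $\Nef(X_{i,\overline\eta})$ is generated by finitely many semiample line bundles. For each generator we fix finitely many sections of a globally generating multiple, together with a basis of $\Pic(X_{\overline\eta})_{\Q}$. All of these data are of finite presentation. Hence they are defined over a finite extension $K'/K$ inside $\overline{K}$. Let $T'\to T$ be the normalization of $T$ in $K'$, a finite surjective morphism. By Lemma \ref{lem:models-open}, after shrinking, all data spread out over an open subset $U'\subset T'$. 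We use the fixed model $X\times_T U'$ (normalized) for $X$ itself, and obtain models $Y_i\to U'$, rational maps $\phi_i$, line bundles, and sections.

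\emph{Step 2: the Mori dream conditions over $U'$.} Lemma \ref{lem:small-open} makes the $\phi_i$ small after shrinking. Lemma \ref{lem:qfactor-open} applied to each $Y_i\to U'$ gives $\Q$-factoriality, $\Pic(Y_i/U')_{\Q}\simeq\Pic(Y_{i,\eta'})_{\Q}$, and condition (2) of Definition \ref{def:mdm}. For the hypotheses, $\Q$-factoriality and the Picard condition pass from $X_{\overline\eta}$ to the generic fibre over $K'$ because the basis of $\Pic(X_{\overline\eta})_{\Q}$ is defined over $K'$. The Picard identification with $N^1$ on $X_{\overline\eta}$ comes from Lemma \ref{lem:picard-conditions}, using that $\overline K$ is uncountable since $k$ is. The extended sections generate the extended line bundles after removing the image of their base loci, which are closed and miss the generic fibre. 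This gives relative semiample generators of a cone $\mathcal N_i$. Via the isomorphism $\Pic(Y_i/U')_{\Q}\simeq\Pic(Y_{i,\overline\eta})_{\Q}$, we identify $\Nef(Y_i/U')$ with $\mathcal N_i$. Relative nef restricts to nef on the geometric generic fibre, so $\Nef(Y_i/U')\subseteq\mathcal N_i$; the reverse holds because the generators are relatively semiample. Similarly, relative movability restricts to the generic fibre. Conversely, a class in $\phi_i^*\Nef(Y_i/U')$ is semiample on $Y_i$, hence movable on $X$ because $\phi_i$ is small. This yields $\Mov=\bigcup_i\phi_i^*\Nef$.

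\emph{Step 3: descent to $T$.} This is the main obstacle. Either choose the data Galois-invariantly, or argue directly that the properties descend along the finite cover $U'\to U$. I would first try to avoid the cover: Okawa's Proposition \ref{prop:okawa} shows $X_\eta$ is a Mori dream space over $K$ in Okawa's sense. One then redoes Steps 1--2 with the SQMs and semiample generators of $X_\eta$ over $K$ itself, spreading directly over $U\subset T$. The delicate point is comparing cones over $K$ with those over $\overline K$. Nefness and movability on the generic fibre must be checked against vertical curves and divisors over $U$. Here one uses flatness and integral geometric fibres (Lemma \ref{lem:open-integral}), so that numerical data are constant in the family, as in the last paragraph of the proof of Lemma \ref{lem:qfactor-open}.
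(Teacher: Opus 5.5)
The route you settle on at the end of Step 3 is the paper's proof, and it works. As written, though, the proposal has two problems.

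First, the finite cover of Steps 1--2 is a detour. As you note, it only yields a Mori dream morphism over $U'\subset T'$, and the descent along $U'\to U$ is never carried out. Once you invoke Proposition \ref{prop:okawa}, everything needed is defined over $K$: the small $\Q$-factorial modifications $\phi_{i,\eta}$, the semiample generators of $\Nef(X_{i,\eta})$, and the decomposition $\Mov(X_\eta)=\bigcup_i\phi_{i,\eta}^*\Nef(X_{i,\eta})$. So the ``delicate point'' you name, comparing cones over $K$ with cones over $\overline K$, never arises. The only comparison is between relative cones over $U$ and cones on $X_\eta$. Your Step 2 arguments already handle this once $\overline\eta$ is replaced by $\eta$. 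A relatively nef class is nef on $X_\eta$ by the flatness argument at the end of the proof of Lemma \ref{lem:qfactor-open}. A relatively movable divisor restricts to a movable one by flat base change of $f_{U*}\OO_{X_U}(D)$. Finally, $\phi_i^*$ of a relatively globally generated class is relatively movable because $\phi_i$ is small over $U$. Together these close the chain
\[ \Mov(X_U/U)\subseteq\Mov(X_\eta)=\bigcup_i\phi_{i,\eta}^*\Nef(X_{i,\eta})=\bigcup_i\phi_i^*\Nef(X_i/U)\subseteq\Mov(X_U/U). \]

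Second, you misplace Lemma \ref{lem:picard-conditions}, which is the only place where the uncountability of $k$ enters. The equality $\Pic(X_{\overline\eta})_{\Q}=N^1(X_{\overline\eta})_{\Q}$ is just condition (2) of Definition \ref{def:mdm}; no lemma is needed for it. What Proposition \ref{prop:okawa} requires is that $X_{\overline\eta}$ be a Mori dream space in Okawa's sense, i.e.\ that $\dim\Pic^0=0$ for $X_{\overline\eta}$ and for each of its small $\Q$-factorial modifications. This is part (2) of the lemma, applied over $\overline K\supset k$, which is uncountable. The step cannot be skipped: over a countable algebraically closed field the two notions differ. For example, an elliptic curve over $\overline{\mathbb F}_p$ satisfies Definition \ref{def:mdm}, since its degree-zero line bundles are all torsion, but it has one-dimensional $\Pic^0$. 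In the other direction, Okawa's definition over $K$ only gives $\Q$-factoriality and $\dim\Pic^0_{X_{i,\eta}/K}=0$. Use part (1) of the lemma to obtain $\Pic(X_{i,\eta})_{\Q}=N^1(X_{i,\eta})_{\Q}$ before applying the last clause of Lemma \ref{lem:qfactor-open}. Your Step 2 argument via a basis defined over $K'$ is specific to $K'$ and does not give this over $K$.
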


\begin{proof}
After replacing $T$ by a nonempty affine open subset, every variety
projective over $T$ is quasi-projective over $k$. Set $K\coloneqq k(T)$.
The algebraic closure $\overline K$ is uncountable since it contains $k$. By assumption, $X_{\overline\eta}$ satisfies Definition \ref{def:mdm}. By Lemma \ref{lem:picard-conditions}(2), both $X_{\overline\eta}$ and every small $\Q$-factorial modification in condition (4) of Definition \ref{def:mdm} have zero-dimensional Picard varieties. Hence, $X_{\overline\eta}$ is a Mori dream space in the sense of Okawa.
Proposition \ref{prop:okawa} then shows that the generic fiber $X_\eta$
is a Mori dream space over $K$ in the sense of Okawa. Choose finitely
many small $\Q$-factorial modifications
\[
 \phi_{i,\eta}\colon X_\eta\dashrightarrow X_{i,\eta},
 \qquad 1\leq i\leq r,
\]
such that
\[
 \Mov(X_\eta)=
 \bigcup_{i=1}^r\phi_{i,\eta}^*\Nef(X_{i,\eta}).
\]
We may assume that the identity map occurs among the $\phi_{i,\eta}$. Every variety
$X_{i,\eta}$ has function field $K(X_\eta)$. Since
$X_{\overline\eta}$ is integral, the extension $K(X_\eta)/K$ is regular.
Consequently, every $X_{i,\eta}$ is geometrically integral. For each $i$,
choose semiample line bundles $L_{ij,\eta}$ whose classes generate
$\Nef(X_{i,\eta})$, positive integers $m_{ij}$ such that
$L_{ij,\eta}^{\otimes m_{ij}}$ is globally generated, and finite sets of
global sections generating these powers.

Lemma \ref{lem:models-open} gives, after shrinking $T$ to a nonempty open subset $U$, projective morphisms $f_i\colon X_i\to U$ with $X_i$ normal, rational maps $\phi_i\colon X_U\dashrightarrow X_i$, extensions $L_{ij}$ of the line bundles $L_{ij,\eta}$, and extensions of the chosen generating sections. For the identity modification, we use
the restriction $X_U\to U$ of the given model $X\to T$. By
Lemma \ref{lem:small-open}, after a further shrinking, each $\phi_i$ is an
isomorphism in codimension one. Lemma \ref{lem:open-integral} allows us to
assume that $U$ is smooth and that every $X_i\to U$ is flat with
geometrically integral fibers. By \cite[Definition 2.1]{Okawa}, the varieties $X_\eta$ and
$X_{i,\eta}$ are $\Q$-factorial and satisfy
$\dim\Pic^0_{X_{i,\eta}/K}=0$ for every $i$, including the identity
modification. Lemma \ref{lem:picard-conditions}(1) therefore gives $\Pic(X_{i,\eta})_{\Q}=N^1(X_{i,\eta})_{\Q}$
for every $i$. By Lemma \ref{lem:qfactor-open}, after shrinking $U$ further, $X_U,X_1,\ldots,X_r$ are $\Q$-factorial, and for every $i$ the
restriction map induces an isomorphism $\Pic(X_i/U)_{\Q}\xrightarrow{\sim}\Pic(X_{i,\eta})_{\Q}$.
Since $\Pic(X_{i,\eta})_{\Q}=N^1(X_{i,\eta})_{\Q}$, we also have $\Pic(X_i/U)_{\Q}=N^1(X_i/U)_{\Q}$. Hence, the maps $\phi_i$ are small $\Q$-factorial modifications over $U$.

Since the generic fiber of $f_i$ is geometrically integral and $U$ is normal, Stein factorization gives $(f_i)_{\ast}\OO_{X_i}=\OO_U$.

The evaluation morphism associated with the extended sections of
$L_{ij}^{\otimes m_{ij}}$ is surjective on the
generic fiber. By the properness of $f_i$, after shrinking $U$, we may assume that every 
$L_{ij}^{\otimes m_{ij}}$ is relatively globally generated.
 
Lemma \ref{lem:qfactor-open} gives an isomorphism $N^1(X_i/U)_{\R}\xrightarrow{\sim}N^1(X_{i,\eta})_{\R}$
induced by restriction to the generic fiber. Under this isomorphism, every
relatively nef class on $X_i$ maps to a nef class on $X_{i,\eta}$. Conversely, let a class in $N^1(X_i/U)_{\R}$ restrict to a
nef class on $X_{i,\eta}$. The restricted class is a nonnegative linear
combination of the classes $[L_{ij,\eta}]$. Under the restriction
identification, the original relative class is the same combination of the
relatively semiample classes $[L_{ij}]$, and is therefore relatively nef.
Consequently, 
\[
 \Nef(X_i/U)=\Nef(X_{i,\eta}).
\]
Let $f_U\colon X_U\to U$ be the restriction of $f$, and let
$D$ be an $f_U$-movable Cartier divisor. Denote the evaluation morphism by
\[
 \varepsilon_D\colon
 f_U^*f_{U*}\OO_{X_U}(D)\longrightarrow\OO_{X_U}(D),
\]
and put $Z_D\coloneqq\Supp\mathrm{Coker}(\varepsilon_D)$.
The morphism $\Spec k(\eta)\to U$ is flat, and flat base change therefore gives
\[
 (f_{U*}\OO_{X_U}(D))_\eta
 \simeq H^0\bigl(X_\eta,\OO_{X_\eta}(D_\eta)\bigr),
\]
see \cite[Lemma 30.5.2, Tag 02KH]{Stacks}. Under this isomorphism, the restriction of $\varepsilon_D$ to $X_\eta$
is the evaluation morphism
\[
H^0\!\left(X_\eta,\OO_{X_\eta}(D_\eta)\right)
\otimes_K\OO_{X_\eta}
\longrightarrow
\OO_{X_\eta}(D_\eta).
\]
Hence, the support of its cokernel is $(Z_D)_\eta$.
Every irreducible component $W$ of $Z_D$ meeting $X_\eta$ dominates $U$.
Thus, $\codim_{X_\eta}W_\eta=\codim_{X_U}W\geq2$, and hence $D_\eta$ is movable.

We next prove that $\phi_i^*\Nef(X_i/U)\subseteq\Mov(X_U/U)$ for every $i$. Fix $i$ and $j$. Choose a Cartier divisor $H_{ij}$ on $X_i$ such that $\OO_{X_i}(H_{ij})\simeq L_{ij}$, and let $D_{ij}$ be the strict transform of $H_{ij}$ on $X_U$. Since $X_U$ is $\Q$-factorial, there is a positive integer $a_{ij}$, divisible by $m_{ij}$, such that $a_{ij}D_{ij}$ is Cartier. The line bundle $L_{ij}^{\otimes a_{ij}}$ is relatively globally generated on $X_i$. The map $\phi_i$ is an isomorphism at the generic point of every prime divisor. Therefore, the strict transform on $X_U$ of the complete linear system $|a_{ij}H_{ij}|$ has no divisorial fixed component. Hence, $a_{ij}D_{ij}$ is relatively movable, and
\[
 \phi_i^*[L_{ij}]
 =\frac{1}{a_{ij}}[a_{ij}D_{ij}]
 \in\Mov(X_U/U).
\]
Since the classes $[L_{ij}]$ generate $\Nef(X_i/U)$, we have $\phi_i^*\Nef(X_i/U)\subseteq\Mov(X_U/U)$. Under the isomorphisms induced by restriction to the generic fiber, pullback by $\phi_i$ corresponds to pullback by $\phi_{i,\eta}$. Therefore,
\[
\begin{aligned}
 \Mov(X_U/U)
 &\subseteq\Mov(X_\eta)\\
 &=\bigcup_{i=1}^r\phi_{i,\eta}^*\Nef(X_{i,\eta})\\
 &=\bigcup_{i=1}^r\phi_i^*\Nef(X_i/U)
 \subseteq\Mov(X_U/U).
\end{aligned}
\]
Hence, all the inclusions are equalities. Lemma \ref{lem:qfactor-open} and the equality of the nef cones give conditions (1)--(3) of Definition \ref{def:mdm} for $X_U\to U$, while the equality above gives (4). Hence, $X_U\to U$ is a Mori dream morphism.
\end{proof}

\subsection{Proof of Theorem \ref{thm:main}}\label{sec:proof-main}

\begin{proof}[Proof of Theorem \ref{thm:main}]
If $k$ is countable and $\dim T>0$, then $T(k)$ is countable, and every subset of $T(k)$ is contained in a countable union of closed points. Thus, the assumption on $S$ cannot hold. If $\dim T=0$, then $T=\Spec k$, and the conclusion follows directly from
the Mori dream property of the unique fiber. We may therefore assume that
$k$ is uncountable.

Lemma \ref{lem:countable-model} gives a countable algebraically closed
subfield $k_0\subset k$, a projective morphism $f_0\colon X_0\to T_0$ between normal integral $k_0$-varieties, and a point $s\in S$ whose image is the generic point of the geometrically integral variety $T_0$. Let $\eta_0$ denote the generic point of $T_0$. Both $X_s$ and the geometric generic fiber $X_{\overline\eta}$ are base
changes of the $k_0(T_0)$-variety $X_{0,\eta_0}$. By Lemma \ref{lem:field-comparison}, there is a
$k_0(T_0)$-isomorphism
\[
\sigma\colon k\xrightarrow{\sim}\overline{k(T)}
\]
compatible with the embeddings determined by $s$ and by the generic point.
Hence,
\[
X_s\times_{\Spec k,\sigma}\Spec\overline{k(T)}
\simeq X_{\overline\eta}.
\]
Since the Mori dream space property is invariant under an isomorphism of
the ground fields and under isomorphisms of varieties, $X_{\overline\eta}$ is also a Mori dream space.

Proposition \ref{prop:generic-criterion} then yields a nonempty open subset
$U\subset T$ such that $X_U\to U$ is a Mori dream morphism.

For the final assertion, every proper closed subset of the curve $T$ is finite. Hence, an uncountable set of points with Mori dream fibers cannot be contained in a countable union of proper closed subsets, and the first assertion completes the proof.
\end{proof}

\bibliographystyle{habbvr}
\bibliography{biblio}

\end{document}